\newtheorem{thm}{Theorem}
\newtheorem{lemma}[thm]{Lemma}
\newtheorem{cor}[thm]{Corollary}
\newtheorem*{thma}{Theorem A}
\newtheorem*{thmb}{Theorem B}
\newcommand{\D}{{\mathbb D}}
\newcommand{\C}{{\mathbb C}}
\newcommand{\bloch}{{\mathcal B}}
\newcommand{\cn}{\C^n}
\newcommand{\bn}{{\mathbb B}_n}
\newcommand{\inb}{\int_{\bn}}
\newcommand{\ind}{\int_\D}
\begin{document}

\title[Integral Representation]
{An Integral Representation for\\ Besov and Lipschitz Spaces}

\author{Kehe Zhu}
\address{Department of Mathematics and Statistics, State University
of New York, Albany, NY 12222, USA}
\email{kzhu@math.albany.edu}

\subjclass[2000]{30H20, 30H25, 32A36}
\keywords{Bergman spaces, Besov spaces, Fock spaces, Carleson measures, 
atomic decomposition, Berezin transform.}

\begin{abstract}
It is well known that functions in the analytic Besov space $B_1$ on the unit disk $\D$
admits an integral representation
$$f(z)=\ind\frac{z-w}{1-z\overline w}\,d\mu(w),$$
where $\mu$ is a complex Borel measure with $|\mu|(\D)<\infty$. We generalize this
result to all Besov spaces $B_p$ with $0<p\le1$ and all Lipschitz
spaces $\Lambda_t$ with $t>1$. We also obtain a version for Bergman and Fock spaces.
\end{abstract}

\maketitle

\section{Introduction}

Let $\D$ denote the unit disk in the complex plane $\C$, $H(\D)$ denote the space of 
all analytic functions in $\D$, and $dA$ denote the normalized area measure on $\D$. 
For $0<p<\infty$ we consider the analytic Besov space $B_p$ consisting of functions 
$f\in H(\D)$ with the property that $(1-|z|^2)^kf^{(k)}(z)$ belongs to $L^p(\D,d\lambda)$, 
where
$$d\lambda(z)=\frac{dA(z)}{(1-|z|^2)^2}$$
is the M\"obius invariant area measure on $\D$ and $k$ is any positive integer
such that $pk>1$. The space $B_p$ is independent of the integer $k$ used.

It is well known that an analytic function $f$ in $\D$ belongs to $B_1$ if and only
if there exists a complex Borel measure $\mu$ on the $\D$ such that $|\mu|(\D)<\infty$ and
\begin{equation}
f(z)=\ind\frac{z-w}{1-z\overline w}\,d\mu(w),\qquad z\in\D.
\label{eq1}
\end{equation}
See \cite{AF,AFP,Z1}. The purpose of this paper is to generalize the above result to 
several other spaces, including Besov spaces, Lipschitz spaces, Bergman 
spaces, and Fock spaces. We state our main results as Theorems A and B below.

\begin{thma}
Suppose $0<p\le1$, $0<r<1$, and $f$ is analytic in $\D$. Then $f\in B_p$ if and
only if it admits a representation
$$f(z)=\ind\frac{z-w}{1-z\overline w}\,d\mu(w),\qquad z\in\D,$$
where $\mu$ is a complex Borel measure on $\D$ such that the localized function
$z\mapsto|\mu|(D(z,r))$ belongs to $L^p(\D,d\lambda)$, where
$$D(z,r)=\left\{w\in\D:\left|\frac{z-w}{1-z\overline w}\right|<r\right\}$$
is the pseudo-hyperbolic disk at $z$ with radius $r$.
\label{1}
\end{thma}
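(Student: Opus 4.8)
The plan is to base everything on an explicit formula for the $k$-th derivative of the kernel, where throughout I fix an integer $k$ with $pk>1$. Writing $K_w(z)=\dfrac{z-w}{1-z\bar w}$, a direct computation gives $K_w'(z)=\dfrac{1-|w|^2}{(1-z\bar w)^2}$ and hence
\[
K_w^{(k)}(z)=\frac{k!\,\bar w^{\,k-1}(1-|w|^2)}{(1-z\bar w)^{k+1}}.
\]
Differentiating the representation under the integral sign then gives
\[
(1-|z|^2)^kf^{(k)}(z)=k!\int_\D \frac{\bar w^{\,k-1}(1-|w|^2)(1-|z|^2)^k}{(1-z\bar w)^{k+1}}\,d\mu(w),
\]
so the whole theorem reduces to controlling the integral operator with kernel $N(z,w)=\dfrac{(1-|w|^2)(1-|z|^2)^k}{|1-z\bar w|^{k+1}}$ between the measure side and $L^p(\D,d\lambda)$.

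For sufficiency I would discretize $\mu$. Fix an $r$-lattice $\{a_j\}$ in the Bergman metric and a Borel partition $\{E_j\}$ subordinate to the cover $\{D(a_j,r)\}$, and put $t_j=|\mu|(E_j)$. On each $E_j$ the quantities $1-|w|^2$ and $|1-z\bar w|$ are comparable to $1-|a_j|^2$ and $|1-z\bar a_j|$, so the pointwise bound becomes $|(1-|z|^2)^kf^{(k)}(z)|\le C\sum_j t_j\,N(z,a_j)$. Since $p\le1$ I may use $\bigl(\sum_j x_j\bigr)^p\le\sum_j x_j^p$ and integrate term by term, reducing matters to the single integral $\int_\D N(z,a_j)^p\,d\lambda(z)$. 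This is a Forelli--Rudin integral whose exponents satisfy the convergence condition precisely because $pk>1$, and its value is a constant independent of $j$; hence $\|f\|_{B_p}^p\le C\sum_j t_j^p\le C\sum_j|\mu|(D(a_j,r))^p$. A routine comparison, using bounded overlap of the lattice and the fact that the $L^p(d\lambda)$-norm of $z\mapsto|\mu|(D(z,r))$ is up to constants independent of $r$, identifies the last sum with $\bigl\||\mu|(D(\cdot,r))\bigr\|_{L^p(d\lambda)}^p$.

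For necessity I would run a reproducing formula in reverse. The weighted Bergman identity $g(z)=c_k\int_\D \dfrac{g(w)(1-|w|^2)^{k-1}}{(1-z\bar w)^{k+1}}\,dA(w)$, applied to $g=f^{(k)}$, suggests the choice
\[
d\mu(w)=\frac{c_k}{k!}\,\frac{f^{(k)}(w)(1-|w|^2)^{k-2}}{\bar w^{\,k-1}}\,dA(w).
\]
By construction the factor $\bar w^{\,k-1}$ cancels and this reproduces the correct $k$-th derivative, so $\int_\D K_w(z)\,d\mu(w)$ and $f$ differ by a polynomial of degree less than $k$; that polynomial lies in a fixed finite-dimensional space and can be absorbed by adding an explicit compactly supported measure, exploiting the non-uniqueness of $\mu$. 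The localization bound $\bigl\||\mu|(D(\cdot,r))\bigr\|_{L^p(d\lambda)}\le C\|f\|_{B_p}$ should then follow from the sub-mean-value property of $|f^{(k)}|^p$ over pseudohyperbolic disks together with the comparabilities of $1-|w|^2$ and of $dA$ with $(1-|z|^2)^2\,d\lambda$ on $D(z,r)$.

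The main obstacle is the factor $\bar w^{\,k-1}$ in $K_w^{(k)}$: it forces the density of $\mu$ to carry a singular factor $\bar w^{\,1-k}$ at the origin, harmless in the classical case $B_1$ (where $k=2$ and $|w|^{-1}$ is area integrable) but a genuine obstruction once $k\ge3$, that is $p\le\frac12$. I would handle it by first peeling off a Taylor polynomial so that the remaining part of $f$ vanishes to order at least $2k-1$ at the origin; then $f^{(k)}(w)/\bar w^{\,k-1}$ is bounded near $0$, the measure is legitimate, and the reproduction and localization arguments apply to the remainder while the polynomial is represented by hand. Checking that a single truncation simultaneously removes the singularity, preserves the $L^p(d\lambda)$ localization estimate, and yields exact rather than merely up-to-polynomial reproduction is where the real work lies.
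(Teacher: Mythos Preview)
Your sufficiency argument is essentially the paper's: both discretize the integral over an $r$-lattice, replace the kernel on each piece by its value at the center via Lemma~4.30 of \cite{Z1}, use $(\sum x_j)^p\le\sum x_j^p$ for $p\le1$, and finish with the Forelli--Rudin integral. Nothing to add there.

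Your necessity argument is genuinely different. The paper invokes the atomic decomposition of $A^p_{pk-2}$ (Theorem~\ref{5}) to write
\[
f^{(k)}(z)=\sum_n c_n\,\frac{1-|z_n|^2}{(1-z\overline z_n)^{k+1}},\qquad\{c_n\}\in l^p,
\]
then anti-differentiates term by term to $\sum_n c_n'\,K_{z_n}(z)$ with $c_n'=c_n/(k!\,\overline z_n^{\,k-1})$; choosing the lattice to avoid $0$ and satisfy $|z_n|\to1$ keeps $\{c_n'\}\in l^p$, and the measure is purely atomic, $\mu=\nu+\sum_n c_n'\delta_{z_n}$ (with $\nu$ handling the polynomial discrepancy via Lemma~\ref{6}). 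In your approach the measure is absolutely continuous, coming from the weighted Bergman reproducing formula applied to $f^{(k)}$. Both are valid; what each buys is this. The atomic route makes the $l^p$ condition transparent and sidesteps the origin singularity entirely by moving the lattice, but it imports the full strength of atomic decomposition. Your route needs only the reproducing identity and the sub-mean-value inequality, which is more elementary, at the cost of the Taylor-subtraction manoeuvre to tame $\overline w^{\,1-k}$ when $k\ge3$. Your outline of that manoeuvre is sound: after peeling off a Taylor polynomial of degree $2k-2$, the density $f^{(k)}(w)(1-|w|^2)^{k-2}/\overline w^{\,k-1}$ is bounded near $0$; away from $0$ the embedding $A^p_{pk-2}\subset A^1_{k-2}$ (valid for $p\le1$ since $(pk)/p=k\le k$) gives finiteness of $|\mu|(\D)$; and the localization bound follows exactly as you say, by bounding $\sup_{D(z,r)}|f^{(k)}|$ via the sub-mean-value estimate over $D(z,2r)$, raising to the $p$-th power, and Fubini. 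It is worth noting that the paper itself remarks, after Theorem~\ref{9}, that in the Lipschitz setting one can likewise replace the atomic construction by an absolutely continuous one; your proposal carries out the analogous program for $B_p$.
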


Recall that for any real number $t$, the analytic Lipschitz space $\Lambda_t$ on the unit
disk consists of functions $f\in H(\D)$ such that $(1-|z|^2)^{k-t}f^{(k)}(z)$ is bounded,
where $k$ is any nonnegative integer greater than $t$.

\begin{thmb}
Suppose $t>1$, $0<r<1$, and $f$ is analytic in $\D$. Then $f\in\Lambda_t$ if and only if
$$f(z)=\ind\frac{z-w}{1-z\overline w}\,d\mu(w),\qquad z\in\D,$$
for some complex Borel measure $\mu$ with the property that
$$\sup_{z\in\D}\frac{|\mu|(D(z,r))}{(1-|z|^2)^t}<\infty.$$
\end{thmb}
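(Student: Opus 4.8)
The plan is to prove Theorem B by adapting the strategy used for Theorem A, exploiting the parallel structure: Theorem A characterizes $B_p$ via the $L^p(d\lambda)$-membership of the localized total variation $z\mapsto|\mu|(D(z,r))$, and Theorem B should characterize $\Lambda_t$ via the *sup-norm* boundedness of the rescaled localization $|\mu|(D(z,r))/(1-|z|^2)^t$. The natural unifying device is the kernel $K(z,w)=(z-w)/(1-z\overline w)$ appearing in the integral representation; I would analyze its size and the size of its derivatives.

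Let me set up the two implications. For the \emph{sufficiency} direction, suppose $\mu$ satisfies $\sup_z|\mu|(D(z,r))/(1-|z|^2)^t<\infty$ and define $f$ by the integral. Fixing an integer $k>t$, I would differentiate under the integral sign to get
\begin{equation*}
f^{(k)}(z)=\ind\frac{\partial^k}{\partial z^k}\,\frac{z-w}{1-z\overline w}\,d\mu(w),
\end{equation*}
and then estimate $(1-|z|^2)^{k-t}|f^{(k)}(z)|$. The key is a pointwise bound on $\partial^k_z K(z,w)$ of the form $|\partial^k_z K(z,w)|\lesssim (1-|w|^2)/|1-z\overline w|^{k+1}$. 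Splitting $\D$ into the pseudo-hyperbolic disk $D(z,r)$ and its complement (or more efficiently into the annular ``Whitney'' regions $D(z,2^{j}r)\setminus D(z,2^{j-1}r)$), I would combine this kernel bound with the hypothesis on $|\mu|(D(z,r))$. The decay $(1-|w|^2)/|1-z\overline w|^{k+1}$ against the growth $(1-|z|^2)^t$ of the measure's mass should yield a convergent geometric series, giving $(1-|z|^2)^{k-t}|f^{(k)}(z)|\le C$, i.e. $f\in\Lambda_t$.

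For the \emph{necessity} direction, suppose $f\in\Lambda_t$, so $(1-|z|^2)^{k-t}f^{(k)}(z)$ is bounded for some $k>t$. The plan is to manufacture the representing measure $\mu$ from $f^{(k)}$ itself. Just as in the $B_1$ and Besov theory one inverts the representation by writing $f$ in terms of an integral of $f^{(k)}$ against a reproducing-type kernel, I would choose a large weight parameter $\beta$ and use the reproducing identity
\begin{equation*}
f(z)=\int_\D f(w)\,\frac{dA_\beta(w)}{(1-z\overline w)^{2+\beta}}
\end{equation*}
(or its derivative-form analogue), integrate by parts $k$ times to transfer the action onto $f^{(k)}$, and massage the resulting kernel into the required shape $K(z,w)=(z-w)/(1-z\overline w)$ times a density. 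Concretely, I expect to set $d\mu(w)=g(w)\,dA(w)$ where $g$ is built from $f^{(k)}(w)$ multiplied by an appropriate power of $(1-|w|^2)$, chosen so that (i) the integral against $K$ reproduces $f$ and (ii) the $\Lambda_t$-bound on $f^{(k)}$ translates into the growth condition on $|\mu|(D(z,r))$. Since $|\mu|(D(z,r))=\int_{D(z,r)}|g|\,dA$, and $dA$-area of $D(z,r)$ is comparable to $(1-|z|^2)^2$ while $|g(w)|\lesssim(1-|w|^2)^{t-k}(1-|w|^2)^{k-\text{(weight)}}$ stays comparable to a fixed power of $(1-|z|^2)$ on $D(z,r)$, the localized mass should come out comparable to $(1-|z|^2)^t$, as desired.

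\textbf{The main obstacle} I anticipate is the necessity direction, specifically producing a kernel that is \emph{exactly} $(z-w)/(1-z\overline w)$ rather than merely a power $(1-z\overline w)^{-s}$. The standard atomic-decomposition and reproducing-kernel machinery naturally yields kernels of the latter type; extracting the specific factor $z-w$ requires an algebraic identity relating $(z-w)/(1-z\overline w)$ to the standard Bergman kernel and its derivatives. I would look for such an identity (likely integrating the reproducing formula once in a way that introduces the vanishing factor $z-w$, exploiting that $K(z,w)$ vanishes on the diagonal $z=w$), and verify that the boundary constant $f(0)$ or any residual terms are absorbed correctly. A secondary technical point is justifying differentiation under the integral and the convergence of the defining integral for $f$ at every $z\in\D$, which should follow from the same kernel estimates used in the sufficiency proof.
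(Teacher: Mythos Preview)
Your sufficiency argument is essentially the paper's: differentiate under the integral sign to get
\[
|f^{(k)}(z)|\le C\int_\D\frac{1-|w|^2}{|1-z\overline w|^{k+1}}\,d|\mu|(w),
\]
and then control the right-hand side using the $t$-Carleson hypothesis. The paper does this not by a dyadic annulus decomposition but by observing that $(1-|w|^2)\,d|\mu|(w)$ is $(t+1)$-Carleson, invoking the Carleson embedding to replace $d|\mu|$ by $(1-|w|^2)^{t-1}\,dA(w)$, and then applying the standard Forelli--Rudin integral estimate. Your Whitney-shell approach would give the same bound; the two are interchangeable here.

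For necessity, you have correctly located the real difficulty (producing the specific kernel $(z-w)/(1-z\overline w)$ rather than a power $(1-z\overline w)^{-s}$), but your proposed fix---integrating the Bergman reproducing formula by parts $k$ times and hunting for an algebraic identity---is vague and is not how the paper proceeds. The paper's device is to drop down one derivative: since $f\in\Lambda_t$ implies $f'\in\Lambda_{t-1}$, one represents $f'$ (not $f$) by a \emph{standard} kernel,
\[
f'(z)=\int_\D\frac{(1-|w|^2)^{t+1}}{(1-z\overline w)^2}\,d\nu(w),
\]
with $|\nu|_r$ bounded (this comes from the atomic decomposition for Lipschitz spaces, or equivalently from the $L^\infty$ integral representation). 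Setting $d\mu=(1-|w|^2)^t\,d\nu$, so that $|\mu|$ is $t$-Carleson, one then integrates in $z$. The point you are missing is the elementary identity
\[
\frac{\partial}{\partial z}\,\frac{z-w}{1-z\overline w}=\frac{1-|w|^2}{(1-z\overline w)^2},
\]
which shows that anti-differentiating the standard kernel $(1-z\overline w)^{-2}$ automatically produces the desired $K(z,w)=(z-w)/(1-z\overline w)$, up to an additive constant that is absorbed using Lemma~\ref{6}. This completely bypasses the search for an ad hoc algebraic identity.
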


In addition to Besov and Lipschitz spaces in dimension $1$, where the integral representation
looks particularly nice, we will also consider Bergman and Fock spaces in higher dimensions.

\section{Preliminaries on measures}

Suppose $\mu$ is a complex Borel measure on $\D$ and $r\in(0,1)$. We can define two
functions on $\D$ as follows.
$$\mu_r(z)=\mu(D(z,r)),\qquad \widehat\mu_r(z)=\frac{\mu(D(z,r))}{(1-|z|^2)^2}.$$
It is well known that the area of the pseudo-hyperbolic disk $D(z,r)$ is
$$\pi r^2\left(\frac{1-|z|^2}{1-r^2|z|^2}\right)^2,$$
which is comparable to $(1-|z|^2)^2$ whenever $r$ is fixed. That is why we think of
$\widehat\mu_r$ as an averaging function for the measure $\mu$. We will call $\mu_r$
a localized function for $\mu$. The behavior of $\mu_r$ and $\widehat\mu_r$ is often 
independent of the particular radius $r$ being used.

Another averaging function for $\mu$ is the so-called Berezin transform of $\mu$. We need
the assumption $|\mu|(\D)<\infty$ in order to define the Berezin transform:
$$\widetilde\mu(z)=\ind\frac{(1-|z|^2)^2}{|1-z\overline w|^4}\,d\mu(w),\qquad z\in\D.$$
See \cite{Z1} for basic information about these averaging operations.

We will need to decompose the unit disk into roughly equal-sized parts in the 
pseudo-hyperbolic metric. More specifically, we will need the following result.

\begin{lemma}
For any $0<r<1$ there exists a sequence $\{z_n\}$ in $\D$ and 
a sequence of Borel subsets $\{D_n\}$ of $\D$ with the following properties:
\begin{enumerate}
\item[(a)] $\D=D_1\cup D_2\cup\cdots\cup D_n\cup\cdots$.
\item[(b)] The sets $D_n$ are mutually disjoint.
\item[(c)] $D(z_n,r/4)\subset D_n\subset D(z_n,r)$ for every $n$.
\end{enumerate}
\label{2}
\end{lemma}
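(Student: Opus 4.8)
The plan is to build the $D_n$ by first choosing a suitably separated set of centers $\{z_n\}$ and then disjointifying a cover in a way that protects a small core around each center. Throughout I would work with the pseudo-hyperbolic metric $\rho(z,w)=\left|\frac{z-w}{1-z\overline w}\right|$, which is a genuine metric on $\D$ (it satisfies the triangle inequality), so that each $D(z,s)$ is the metric ball of radius $s$ about $z$ and the usual ball estimates apply.

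First I would produce the centers. Using Zorn's lemma, or a greedy construction available since $\D$ is separable in $\rho$, I choose a set $\{z_n\}$ that is maximal among all subsets of $\D$ whose points are pairwise at pseudo-hyperbolic distance at least $r/2$. Such a maximal separated set is automatically countable, so I may list it as a sequence. Two consequences are immediate from the triangle inequality. On one hand the cores $D(z_n,r/4)$ are pairwise disjoint, for if $w$ lay in $D(z_i,r/4)\cap D(z_j,r/4)$ with $i\neq j$ then $\rho(z_i,z_j)\le\rho(z_i,w)+\rho(w,z_j)<r/2$, contradicting the separation. On the other hand the larger balls cover, that is $\D=\bigcup_n D(z_n,r/2)$, since any $w$ with $\rho(w,z_n)\ge r/2$ for every $n$ could be adjoined to the separated set, contradicting maximality. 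As $r/2<r$ we have in particular $D(z_n,r/4)\subset D(z_n,r/2)\subset D(z_n,r)$.

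The key step is to convert the cover $\{D(z_n,r/2)\}$ into a partition without destroying the lower containment. The naive ordered disjointification $D(z_n,r/2)\setminus\bigcup_{k<n}D(z_k,r/2)$ fails here, because a core $D(z_n,r/4)$ can meet an earlier ball $D(z_k,r/2)$ and would be partly eaten away. Instead I would protect the cores and redistribute only the leftover region $\D\setminus\bigcup_k D(z_k,r/4)$, setting
$$D_n=D(z_n,r/4)\cup\Big\{w\in\D:\ w\notin\textstyle\bigcup_k D(z_k,r/4)\ \text{and}\ n=\min\{m:w\in D(z_m,r/2)\}\Big\}.$$
Each leftover point is assigned to exactly one index, the smallest $m$ for which it lies in $D(z_m,r/2)$, which exists by the covering property; hence the assigned pieces are mutually disjoint and disjoint from every core, and together with the disjointness of the cores this yields (b). Property (a) holds because every point is either in some core or is a leftover point assigned to some index. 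For (c), the inclusion $D(z_n,r/4)\subset D_n$ is built in, while $D_n\subset D(z_n,r)$ follows because the core sits in $D(z_n,r/4)\subset D(z_n,r)$ and each assigned leftover point lies in $D(z_n,r/2)\subset D(z_n,r)$.

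Finally I would verify measurability: the balls $D(z,s)$ are open, the index function $w\mapsto\min\{m:w\in D(z_m,r/2)\}$ is Borel measurable since its level sets are built from countably many balls by intersections and complements, and hence each $D_n$ is a Borel set. The only genuine obstacle in the argument is the tension in the third step between disjointness and the lower containment $D(z_n,r/4)\subset D_n$; the resolution is precisely the decision to disjointify only on the complement of the union of cores, leaving the cores untouched. The exact value $1/4$ of the inner-to-outer radius ratio plays no essential role, since any fixed separation strictly below $r$ would work and $r/2$ is merely a convenient choice.
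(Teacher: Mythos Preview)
Your construction is correct: choosing a maximal $r/2$-separated set, observing that the cores $D(z_n,r/4)$ are disjoint and the balls $D(z_n,r/2)$ cover, and then disjointifying only the leftover region while protecting the cores yields exactly the partition required, and your verification of (a)--(c) and of Borel measurability is sound. The paper itself gives no argument for this lemma, simply citing it as well known from \cite{Z1}; what you have written is precisely the standard proof one finds there, so there is nothing substantive to compare.
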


\begin{proof}
This is well known. See \cite{Z1} for example.
\end{proof}

Any sequence $\{z_n\}$ satisfying the three conditions above will be called an $r$-lattice
in the pseudo-hyperbolic metric.

\begin{lemma}
Suppose $\mu$ is a positive Borel measure on $\D$ and $0<p\le\infty$. If $r$ and $s$
are two radii in $(0,1)$ and $\{z_n\}$ is an $r$-lattice in the pseudo-hyperbolic metric. 
Then the following conditions are equivalent.
\begin{enumerate}
\item[(a)] The function $\widehat\mu_s(z)$ belongs to $L^p(\D,d\lambda)$.
\item[(b)] The sequence $\widehat\mu_r(z_n)$ belongs to $l^p$.
\end{enumerate}
If $1/2<p\le\infty$, then the above conditions are also equivalent to
\begin{enumerate}
\item[(c)] The function $\widetilde\mu(z)$ belongs to $L^p(\D,d\lambda)$.
\end{enumerate}
\label{3}
\end{lemma}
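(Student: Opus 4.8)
The plan is to reduce everything to pointwise comparisons of the three averaging functions and then to pass between integrals and lattice sums using Lemma \ref{2}. Throughout I will use the standard pseudo-hyperbolic estimates: if $w\in D(z,\rho)$ then $1-|w|^2$, $1-|z|^2$ and $|1-z\overline w|$ are mutually comparable, with constants depending only on $\rho$; the M\"obius invariance of $\lambda$ gives that $\lambda(D(z,\rho))=\lambda(D(0,\rho))$ is a finite positive constant; and the disks of an $r$-lattice have bounded overlap, meaning that $\sup_w\#\{n:w\in D(z_n,\rho)\}<\infty$ and that $\#\{m:D_m\cap D(z_n,\rho)\neq\emptyset\}$ is bounded uniformly in $n$. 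I will also need the classical integral estimate: for $b>-1$ and $t>0$, $\ind(1-|z|^2)^b|1-z\overline w|^{-(2+b+t)}\,dA(z)$ is comparable to $(1-|w|^2)^{-t}$.

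For the equivalence of (a) and (b) I will argue by covering, using only sub-additivity so that the range $0<p\le1$ is treated on the same footing as $p>1$. For (b)$\Rightarrow$(a), if $z\in D_n\subset D(z_n,r)$ then the triangle inequality gives $D(z,s)\subset D(z_n,t)$ for a fixed $t=t(r,s)$, so that $\widehat\mu_s(z)$ is dominated by a constant times $\widehat\mu_t(z_n)$; covering $D(z_n,t)$ by the boundedly many cells $D_m$ that meet it gives $\widehat\mu_t(z_n)$ bounded by a constant times $\sum_m\widehat\mu_r(z_m)$, and raising to the $p$-th power (sub-additivity when $p\le1$, the power-mean inequality over a bounded number of terms when $p>1$) and summing over $n$ with bounded overlap yields that $\int\widehat\mu_s^p\,d\lambda\le C\sum_n\widehat\mu_r(z_n)^p$. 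For (a)$\Rightarrow$(b) I will cover $D(z_n,r)$ by boundedly many disks $D(v_{n,i},s/2)$ centered at a maximal $s/4$-separated subset of $D(z_n,r)$, so that $\widehat\mu_r(z_n)$ is dominated by a constant times $\sum_i\widehat\mu_{s/2}(v_{n,i})$; since $\widehat\mu_{s/2}(v_{n,i})$ is in turn dominated by $\widehat\mu_s(u)$ for every $u\in D(v_{n,i},s/8)$, averaging over that disk and using the disjointness and bounded overlap of the enlarged lattice disks converts the sum back into $\int\widehat\mu_s^p\,d\lambda$. The case $p=\infty$ follows from the same pointwise inequalities with sums and integrals replaced by suprema.

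For the Berezin transform I will establish (c)$\Leftrightarrow$(a) when $p>1/2$; here I assume, as in the discussion preceding the lemma, that $\mu(\D)<\infty$ so that $\widetilde\mu$ is defined. The direction (c)$\Rightarrow$(a) is pointwise: restricting the defining integral of $\widetilde\mu(z)$ to $D(z,s)$ and using $|1-z\overline w|\asymp 1-|z|^2$ there shows that $\widehat\mu_s(z)$ is bounded by a constant times $\widetilde\mu(z)$ for every $z$, whence the $L^p(\D,d\lambda)$ norm of $\widehat\mu_s$ is controlled by that of $\widetilde\mu$ for all $p$. For the reverse I will prove (b)$\Rightarrow$(c). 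Splitting $\D=\bigcup_n D_n$ and using $|1-z\overline w|\asymp|1-z\overline{z_n}|$ and $1-|w|^2\asymp1-|z_n|^2$ on $D_n$ yields the domination of $\widetilde\mu(z)$ by a constant times $\sum_n K(z,z_n)\,\widehat\mu_r(z_n)$, where $K(z,w)=(1-|z|^2)^2(1-|w|^2)^2|1-z\overline w|^{-4}$. Writing $a_n=\widehat\mu_r(z_n)$, it then suffices to bound the operator $a\mapsto\sum_nK(\cdot,z_n)a_n$ from $l^p$ into $L^p(\D,d\lambda)$. For $1/2<p\le1$, sub-additivity reduces this to evaluating $\ind K(z,z_n)^p\,d\lambda(z)$, and the integral estimate with $b=2p-2$ and $t=2p$ shows this to be a constant independent of $n$, giving $\int\widetilde\mu^p\,d\lambda\le C\sum_n a_n^p$; for $1<p\le\infty$ the operator is bounded by interpolating between its $l^1\to L^1(\D,d\lambda)$ and $l^\infty\to L^\infty$ bounds, where the latter uses that $\sum_n K(z,z_n)\asymp\ind(1-|z|^2)^2|1-z\overline w|^{-4}\,dA(w)\asymp1$.

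The crux is the threshold $p>1/2$ in (b)$\Rightarrow$(c). It enters precisely through the requirement $b=2p-2>-1$ in the integral estimate used to evaluate $\ind K(z,z_n)^p\,d\lambda(z)$; for $p\le1/2$ this integral diverges and $\widetilde\mu$ ceases to be comparable to the averaging functions, which is exactly why (c) is omitted in that range. Everything else, namely the geometric comparisons, the bounded-overlap bookkeeping, and the interpolation for $p>1$, is routine once the pointwise kernel domination is in hand, so I expect the principal effort to lie in fixing the covering constants and in verifying this one integral estimate at the endpoint.
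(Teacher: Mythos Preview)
Your argument is correct and is essentially the standard proof of this fact. Note, however, that the paper does not actually prove this lemma: its entire proof reads ``This is also well known. See \cite{Z1} for example.'' So there is nothing in the paper to compare against beyond a citation; what you have written is a detailed version of the argument one finds in that reference, with the covering/bounded-overlap bookkeeping for (a)$\Leftrightarrow$(b), the pointwise domination $\widehat\mu_s\le C\,\widetilde\mu$ for (c)$\Rightarrow$(a), and the kernel estimate $\int_\D K(z,z_n)^p\,d\lambda(z)\asymp 1$ (valid precisely when $2p-2>-1$) for (b)$\Rightarrow$(c). Your identification of the threshold $p>1/2$ through the exponent $b=2p-2$ in the Forelli--Rudin integral is exactly the reason the Berezin-transform condition is dropped for $p\le1/2$. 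One minor remark: you correctly flag the implicit hypothesis $\mu(\D)<\infty$ needed for $\widetilde\mu$ to be defined, which the paper mentions before the lemma but not in its statement.
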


\begin{proof}
This is also well known. See \cite{Z1} for example.
\end{proof}

As a consequence of the above lemma on the averaging function $\widehat\mu_r$ we obtain
several equivalent conditions for the localized function $\mu_r$.

\begin{cor}
Suppose $0<p\le\infty$ and $\mu$ is a positive Borel measure on $\D$. If $r$ and $s$
are two radii in $(0,1)$ and $\{z_n\}$ is an $r$-lattice in the pseudo-hyperbolic 
metric. Then the following conditions are equivalent.
\begin{enumerate}
\item[(a)] The sequence $\{\mu_r(z_n)\}$ belongs to $l^p$.
\item[(b)] The function $\mu_s(z)$ belongs to $L^p(\D,d\lambda)$.
\end{enumerate}
If $p\not=\infty$, the above conditions are also equivalent to
\begin{enumerate}
\item[(c)] The function $\widehat\mu_s(z)$ belongs to $L^p(\D,dA_{2(p-1)})$, where
$$dA_{2(p-1)}(z)=(1-|z|^2)^{2(p-1)}\,dA(z).$$
\end{enumerate}
\label{4}
\end{cor}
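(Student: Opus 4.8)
The plan is to derive the Corollary directly from Lemma \ref{3}, using the elementary relation between $\mu_r$ and $\widehat\mu_r$ coming from the comparability of the area of $D(z,r)$ to $(1-|z|^2)^2$. Since $\widehat\mu_r(z)=\mu_r(z)/(1-|z|^2)^2$ by definition, the two localized objects differ only by the factor $(1-|z|^2)^2$, and the whole point is to track how this factor moves the weight around when we pass between $l^p$ sums and various weighted $L^p$ integrals.

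First I would establish the equivalence of (a) and (b). Applying Lemma \ref{3} to the same measure $\mu$ (which is positive by hypothesis) gives that $\{\widehat\mu_r(z_n)\}\in l^p$ if and only if $\widehat\mu_s\in L^p(\D,d\lambda)$. To convert these statements about $\widehat\mu$ into statements about $\mu$, note that on the set $D_n$ from Lemma \ref{2} the quantity $(1-|z|^2)^2$ is comparable to $(1-|z_n|^2)^2$, with constants depending only on $r$; this is the standard fact that $1-|z|^2$ is comparable to $1-|z_n|^2$ throughout a pseudo-hyperbolic disk of fixed radius. Hence $\widehat\mu_r(z_n)=\mu_r(z_n)/(1-|z_n|^2)^2$, so $\{\widehat\mu_r(z_n)\}\in l^p$ is equivalent to $\{\mu_r(z_n)/(1-|z_n|^2)^2\}\in l^p$; I would then absorb the weight by observing that the roles of $r$ and $s$ as radii and lattice parameters are interchangeable, so one may reindex and compare $\mu_r(z_n)$ with $\mu_s$ evaluated against $d\lambda$ after restoring the $(1-|z|^2)^2$ factor. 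The cleanest route is actually to feed the measure $d\nu=(1-|z|^2)^{-2}\,d\mu$ or to rescale and invoke Lemma \ref{3} with $d\lambda=(1-|z|^2)^{-2}dA$ built in, so that $\mu_s(z)=\widehat\mu_s(z)(1-|z|^2)^2$ and $\int_\D\mu_s(z)^p\,d\lambda(z)=\int_\D\widehat\mu_s(z)^p(1-|z|^2)^{2p}\,d\lambda(z)=\int_\D\widehat\mu_s(z)^p(1-|z|^2)^{2p-2}\,dA(z)$.

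That last computation is exactly what yields part (c): since $d\lambda(z)=(1-|z|^2)^{-2}\,dA(z)$, we have $\mu_s(z)^p\,d\lambda(z)=\widehat\mu_s(z)^p(1-|z|^2)^{2(p-1)}\,dA(z)=\widehat\mu_s(z)^p\,dA_{2(p-1)}(z)$, so the integrability of $\mu_s$ in $L^p(\D,d\lambda)$ is literally the same statement as the integrability of $\widehat\mu_s$ in $L^p(\D,dA_{2(p-1)})$. Thus (b) and (c) are identical as integrals, valid for every finite $p$, which is why the restriction $p\not=\infty$ appears precisely for (c): when $p=\infty$ the passage through the weight $(1-|z|^2)^{2(p-1)}$ is no longer an honest substitution. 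The main obstacle, such as it is, is bookkeeping rather than mathematics: one must handle the endpoint values of $p$ with care, be explicit that all comparability constants depend only on $r$ (and on the fixed lattice) and not on $\mu$, and make sure that the independence of the conditions from the particular radius—already guaranteed for $\widehat\mu$ by Lemma \ref{3}—is inherited by $\mu_r$ so that the choice of $r$ versus $s$ in (a) and (b) is immaterial.
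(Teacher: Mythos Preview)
Your strategy is the paper's: reduce everything to Lemma~\ref{3} via the identity $\mu_t(z)=(1-|z|^2)^2\widehat\mu_t(z)$. Your computation showing (b)$\Leftrightarrow$(c) is clean and correct, since $\mu_s(z)^p\,d\lambda(z)=\widehat\mu_s(z)^p(1-|z|^2)^{2(p-1)}\,dA(z)$ is an exact identity.

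The gap is in your route to (a)$\Leftrightarrow$(b). Applying Lemma~\ref{3} to $\mu$ itself gives $\{\widehat\mu_r(z_n)\}\in l^p\iff\widehat\mu_s\in L^p(\D,d\lambda)$, i.e.\ $\{\mu_r(z_n)/(1-|z_n|^2)^2\}\in l^p$, which is \emph{not} the condition $\{\mu_r(z_n)\}\in l^p$ you want; the factor $(1-|z_n|^2)^{-2}$ does not just ``absorb'' away. Your proposed fix, passing to $d\nu=(1-|z|^2)^{-2}\,d\mu$, has the exponent on the wrong side (and this $\nu$ need not even be locally finite). The correct auxiliary measure is $d\nu=(1-|z|^2)^{2}\,d\mu$: then, using comparability of $(1-|w|^2)^2$ to $(1-|z|^2)^2$ on $D(z,t)$, one has $\widehat\nu_t(z)\sim\mu_t(z)$ for every $t\in(0,1)$, and Lemma~\ref{3} applied to $\nu$ yields (a)$\Leftrightarrow$(b) directly. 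This is exactly what the paper does.
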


\begin{proof}
Consider the positive Borel measure
$$d\nu(z)=(1-|z|^2)^2\,d\mu(z).$$
It is well known that $(1-|z|^2)^2$ is comparable to $(1-|z_n|^2)^2$ for $z$ in $D(z_n,t)$,
where $t$ is any fixed radius in $(0,1)$. See \cite{Z1} for example. Thus $\widehat\nu_t(z)$ 
is comparable to $\mu_t(z)$, and $\widehat\nu_t(z)$ is also comparable to 
$(1-|z|^2)^2\widehat\mu_t(z)$. The desired result then follows from Lemma~\ref{3}.
\end{proof}

In view of the equivalence of conditions (a) and (c) in Lemma~\ref{3}, it is tempting 
to conjecture that condition (c) in Corollary~\ref{4} above is equivalent to
$\widetilde\mu\in L^p(\D,dA_{2(p-1)})$ whenever $p>1/4$. It turns out that this is not
true. This already fails at $p=1$. In fact, if $p=1$, the condition $\widetilde\mu\in
L^p(\D,dA_{2(p-1)})$ means
\begin{eqnarray*}
+\infty&>&\ind\widetilde\mu(z)\,dA(z)\\
&=&\ind(1-|z|^2)^2\,dA(z)\ind\frac{d\mu(w)}{|1-z\overline w|^4}\\
&=&\ind\,d\mu(w)\ind\frac{(1-|z|^2)^2\,dA(z)}{|1-z\overline w|^4}.
\end{eqnarray*}
This together with Lemma 3.10 in \cite{Z1} shows that, for $p=1$, the condition
$\widetilde\mu\in L^p(\D,dA_{2(p-1)})$ is the same as
$$\ind\log\frac1{1-|w|^2}\,d\mu(w)<\infty.$$
On the other hand, it is easy to see that condition (a) in Corollary~\ref{4}, for $p=1$,
simply means $\mu(\D)<\infty$, which is obviously different from the integral condition above.

The Berezin transform will not really be used in the rest of the paper, but it is always
interesting and insightful to compare the behavior of $\widehat\mu_r$ and $\widetilde\mu$.

Another notion critical to the integral representation of Lipschitz spaces is that of
Carleson measures. 

Let $t>0$. We say that a positive Borel measure $\mu$ on the unit disk $\D$ is $t$-Carleson if
$$\sup_{z\in\D}\frac{\mu(D(z,r))}{(1-|z|^2)^t}<\infty$$
for some $r\in(0,1)$. It is well known that if the above condition holds for some $r\in(0,1)$,
then it holds for every $r\in(0,1)$. Thus being $t$-Carleson is independent of the radius
$r$ used in the definition.

If $t>1$, then every $t$-Carleson measure is finite. In fact, in this case, we use 
Lemma~\ref{2} to get
\begin{eqnarray*}
\mu(\D)&=&\sum_{n=1}^\infty\mu(D_n)\le\sum_{n=1}^\infty\mu(D(z_n,r))\\
&\le&C\sum_{n=1}^\infty(1-|z_n|^2)^t\le C'\sum_{n=1}^\infty\int_{D_n}(1-|z|^2)^{t-2}\,dA(z)\\
&=&C'\ind(1-|z|^2)^{t-2}\,dA(z)<\infty.
\end{eqnarray*}
It is clear from the above argument that not every $t$-Carleson measure is finite when
$t\le1$.

If $t>1$, then $\mu$ is $t$-Carelson if and only if for some (or every) $0<p<\infty$ there
exists a constant $C=C_p>0$ such that
$$\ind|f(z)|^p\,d\mu(z)\le C\ind|f(z)|^p(1-|z|^2)^{t-2}\,dA(z)$$
for all $f\in H(\D)$. Because of this, such measures are also called Carleson measures for
Bergman spaces. When $t>1$, it is also known that $\mu$ is $t$-Carleson if and only if
there is a constant $C>0$ such that $\mu(S_h)\le Ch^t$ for all ``Carleson squares'' $S_h$
of side length $h$. See \cite{Z2,RZ}.

We warn the reader that there is a fine distinction between $1$-Carleson measures defined 
above and the classical Carleson measures (for Hardy spaces). This can be seen by
considering an arbitrary Bloch function $f$ in the unit disk. In fact, for such a function
if we define the measure $\mu$ by
$$d\mu(z)=(1-|z|^2)|f'(z)|^2\,dA(z),$$
then $\mu$ is $1$-Carleson, because
\begin{eqnarray*}
\mu(D(z,r))&=&\int_{D(z,r)}(1-|w|^2)|f'(w)|^2\,dA(w)\\
&\sim&\frac1{1-|z|^2}\int_{D(z,r)}(1-|w|^2)^2|f'(w)|^2\,dA(w)\\
&\le& C(1-|z|^2).
\end{eqnarray*}
But $\mu$ is not a classical Carleson measure, because being so would mean that $f$ is in
BMOA. See \cite{G}. It is certainly well known that BMOA is strictly contained in the 
Bloch space. A classical Carleson measure is $1$-Carleson, but not the other way around.

Similarly, for $0<t<1$, there is a subtle difference between measures satisfying the condition
$\mu(S_h)\le Ch^t$ and those satisfying the condition $\mu(D(z,r))\le C(1-|z|^2)^t$.

\section{Besov spaces in the unit disk}

We prove Theorem A in this section. For this purpose we need to make use of weighted
Bergman spaces. Thus for any $\alpha>-1$ we let
$$dA_\alpha(z)=(\alpha+1)(1-|z|^2)^\alpha\,dA(z)$$
denote the weighted area measure on $\D$. The spaces
$$A^p_\alpha=H(\D)\cap L^p(\D,dA_\alpha),\qquad 0<p<\infty,$$
are called weighted Bergman spaces with standard weights.

We will see that Theorem A can be thought of as an extension of the following atomic 
decomposition for weighted Bergman spaces, which can be found in \cite{Z1} for example.

\begin{thm}
Suppose $0<p<\infty$, $\alpha>-1$, and 
\begin{equation}
b>\max(1,1/p)+(\alpha+1)/p.
\label{eq2}
\end{equation}
There exists some positive number $\delta$ such that for any $r$-lattice $\{z_n\}$ with 
$r<\delta$ the weighted Bergman space $A^p_\alpha$ consists exactly of functions of the form
\begin{equation}
f(z)=\sum_{n=1}^\infty c_n\frac{(1-|z_n|^2)^{(pb-2-\alpha)/p}}{(1-z\overline z_n)^b},
\label{eq3}
\end{equation}
where $\{c_n\}\in l^p$.
\label{5}
\end{thm}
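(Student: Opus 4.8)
The plan is to prove the two inclusions separately, since the statement identifies $A^p_\alpha$ with the functions of the form \eqref{eq3} carrying an $l^p$ coefficient sequence. Everything rests on the Rudin--Forelli estimate $\ind(1-|w|^2)^c|1-z\overline w|^{-a}\,dA(w)\sim(1-|z|^2)^{c+2-a}$ for $c>-1$ and $a>c+2$, together with $(1-|w|^2)^2\sim(1-|z_n|^2)^2\sim|D_n|$ for $w\in D_n$, already recorded in the proof of Corollary~\ref{4}; note that \eqref{eq2} forces $bp>\alpha+2$, because $\max(1,1/p)+(\alpha+1)/p\ge(\alpha+2)/p$. For the sufficiency I would first observe that each atom $a_n(z)=(1-|z_n|^2)^{(pb-2-\alpha)/p}(1-z\overline z_n)^{-b}$ satisfies $\|a_n\|_{A^p_\alpha}\sim1$ uniformly in $n$, directly from the displayed estimate with $c=\alpha$ and $a=bp$. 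When $0<p\le1$ the inequality $|\sum_nc_na_n|^p\le\sum_n|c_n|^p|a_n|^p$ integrates at once to $\|f\|_{A^p_\alpha}^p\le C\sum_n|c_n|^p$; when $p>1$ this fails, and I would instead view $\{c_n\}\mapsto\sum_nc_na_n$ as an integral operator against a discrete measure and bound it on $L^p(\D,dA_\alpha)$ by Schur's test with power weights $(1-|z|^2)^s$, the stronger bound $b>1+(\alpha+1)/p$ in \eqref{eq2} being precisely what the test consumes. Either way the series converges in $A^p_\alpha$.

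The converse is the real content. Given $f\in A^p_\alpha$, I would start from the reproducing formula $f(z)=c_b\ind f(w)(1-|w|^2)^{b-2}(1-z\overline w)^{-b}\,dA(w)$, valid under \eqref{eq2} (see \cite{Z1}), and discretize it along the partition $\D=\bigcup_nD_n$ of Lemma~\ref{2}. Freezing the integrand at the center $z_n$ on each $D_n$ and using $|D_n|\sim(1-|z_n|^2)^2$ produces the operator
$$Sf(z)=c_b\sum_nf(z_n)(1-|z_n|^2)^b(1-z\overline z_n)^{-b}=\sum_n\Big(c_bf(z_n)(1-|z_n|^2)^{(\alpha+2)/p}\Big)a_n(z),$$
where the identity $b-(pb-2-\alpha)/p=(\alpha+2)/p$ lets me recognize the atoms. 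Its coefficients obey $\sum_n|c_n|^p\sim\sum_n|f(z_n)|^p(1-|z_n|^2)^{\alpha+2}\le C\|f\|_{A^p_\alpha}^p$, by the sub-mean-value inequality for the subharmonic function $|f|^p$ combined with the bounded overlap of the disks $D(z_n,r)$. Hence $Sf$ is automatically of the form \eqref{eq3} with an $l^p$ sequence.

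It remains to recover $f$ from $Sf$, and here the difficulty concentrates. I would prove that $S$ approximates the identity on $A^p_\alpha$, namely $\|I-S\|\to0$ as $r\to0$: subtracting $Sf$ from the reproducing integral writes $f-Sf$ as a sum over $n$ of integrals of the oscillation of $w\mapsto f(w)(1-|w|^2)^{b-2}(1-z\overline w)^{-b}$ across $D_n$, and bounding that oscillation by the pseudo-hyperbolic diameter of $D_n$ (at most $r$) before estimating the kernel sums by the Rudin--Forelli bounds and Schur test above should yield $\|f-Sf\|_{A^p_\alpha}\le C(r)\|f\|_{A^p_\alpha}$ with $C(r)\to0$. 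Choosing $\delta$ so that $C(r)<1$ for $r<\delta$ renders $S=I-(I-S)$ invertible on $A^p_\alpha$ by a Neumann series; then any $f\in A^p_\alpha$ equals $Sg$ with $g=S^{-1}f\in A^p_\alpha$, and expanding $Sg$ exhibits $f$ in the form \eqref{eq3} with coefficients $c_bg(z_n)(1-|z_n|^2)^{(\alpha+2)/p}\in l^p$. I expect the oscillation estimate, and the uniform bookkeeping needed to see that $C(r)\to0$ rather than merely staying bounded, to be the main obstacle; the two regimes $p\le1$ and $p>1$ would be handled there exactly as in the sufficiency step.
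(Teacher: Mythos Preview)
The paper does not prove this theorem; it quotes it as the standard atomic decomposition for weighted Bergman spaces and refers the reader to \cite{CR} and \cite{Z1}. Your outline is exactly the Coifman--Rochberg argument as presented in \cite{Z1}---uniform norm bounds on the atoms summed via the $p$-triangle inequality for $p\le1$ or Schur's test for $p>1$, and for the converse a discretization $S$ of the weighted reproducing integral shown to satisfy $\|I-S\|<1$ for small $r$ and then inverted by a Neumann series---so your proposal is correct and coincides with the proof the paper invokes by reference.
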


Atomic decomposition for Bergman spaces was first obtained in \cite{CR}. We will follow
the proof of the above theorem as found in \cite{Z1}. We begin with an explict construction 
for a measure in (\ref{eq1}) when $f$ is a polynomial.

\begin{lemma}
If $f$ is a polynomial and $0<p\le\infty$, there exists a complex Borel measure $\mu$ 
such that the localized function $|\mu|_r(z)$ is in $L^p(\D,d\lambda)$ and
$$f(z)=\ind\frac{z-w}{1-z\overline w}\,d\mu(w)$$
for all $z\in\D$.
\label{6}
\end{lemma}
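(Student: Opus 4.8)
The plan is to reduce the representation to a reproducing formula for the derivative together with a single normalization at the origin. First I would record the elementary identity
$$\frac{\partial}{\partial z}\,\frac{z-w}{1-z\overline w}=\frac{1-|w|^2}{(1-z\overline w)^2},$$
so that if $g(z)=\ind\frac{z-w}{1-z\overline w}\,d\mu(w)$ for a finite measure $\mu$, then differentiation under the integral sign (legitimate since the kernel and its $z$-derivatives are bounded uniformly for $z$ in compact subsets of $\D$) gives $g'(z)=\ind\frac{1-|w|^2}{(1-z\overline w)^2}\,d\mu(w)$ and $g(0)=-\ind w\,d\mu(w)$. Hence it suffices to produce $\mu$ satisfying the moment conditions $\ind(1-|w|^2)\overline w^{\,m}\,d\mu(w)=a_{m+1}$ for all $m\ge0$ and $\ind w\,d\mu(w)=-a_0$, where $f(z)=\sum_k a_kz^k$; comparing Taylor coefficients via $(1-z\overline w)^{-2}=\sum_m(m+1)z^m\overline w^{\,m}$ shows these two families of conditions force $g'=f'$ and $g(0)=f(0)$, whence $g=f$.

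The naive choice $d\mu=\frac{f'(w)}{1-|w|^2}\,dA(w)$, suggested by the reproducing formula $h(z)=\ind h(w)(1-z\overline w)^{-2}\,dA(w)$ for polynomials $h$, matches the derivative but fails on two counts: it is not a finite measure (logarithmic divergence at the boundary) and it produces the wrong constant term. This is the crux of the argument, and I would resolve it by searching for $\mu$ within the class of measures having polynomial density against a high power of $1-|w|^2$. Concretely I would set
$$d\mu(w)=\Bigl[\sum_{m=0}^{N-1}b_m w^m+b_{-1}\overline w\Bigr](1-|w|^2)^s\,dA(w),$$
where $N$ is the degree of $f$ and $s$ is to be chosen large. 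Writing $I_k=\ind|w|^{2k}(1-|w|^2)^s\,dA(w)>0$, the moment conditions decouple: the orthogonality relations $\ind w^j\overline w^{\,l}(1-|w|^2)^\sigma\,dA=0$ for $j\ne l$ make the system diagonal, so each $b_m$ is determined by $a_{m+1}$ alone and $b_{-1}$ by $a_0$ alone, the $\overline w$-term contributing nothing to the derivative moments and the $w^m$-terms contributing nothing to the $\ind w\,d\mu$ condition. Thus the required $\mu$ exists and is given by an explicit finite formula.

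It remains to verify the integrability of the localized function, which is where the freedom in $s$ is spent. Since the density is bounded by a constant multiple of $(1-|w|^2)^s$, I would estimate $|\mu|(D(z,r))\le C\int_{D(z,r)}(1-|w|^2)^s\,dA(w)\le C'(1-|z|^2)^{s+2}$, using that $1-|w|^2\sim1-|z|^2$ on $D(z,r)$ and that the area of $D(z,r)$ is comparable to $(1-|z|^2)^2$. Consequently $\ind|\mu|_r(z)^p\,d\lambda(z)\le C''\ind(1-|z|^2)^{(s+2)p-2}\,dA(z)$, which is finite as soon as $(s+2)p>1$ and bounded when $p=\infty$; choosing $s$ large enough therefore handles every $p\in(0,\infty]$ at once. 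I expect the only genuine obstacle to be the one already identified---arranging simultaneously the finiteness of $\mu$ and the correct value at the origin---while the moment computation and the integrability estimate are routine.
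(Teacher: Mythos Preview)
Your proof is correct and follows essentially the same route as the paper: the paper handles each monomial $z^n$ ($n\ge1$) with the density $c\,w^{n-1}(1-|w|^2)^N\,dA(w)$ and the constant term with $c\,\frac{|w|}{w}(1-|w|^2)^N\,dA(w)$, then invokes Taylor expansion and polar coordinates---your ansatz $\bigl[\sum_m b_m w^m+b_{-1}\overline w\bigr](1-|w|^2)^s\,dA(w)$ packages exactly these measures (with $\overline w$ in place of $\overline w/|w|$ for the constant piece) and your moment/orthogonality computation is precisely that Taylor-plus-polar-coordinates verification written out. The paper's argument is terser, but the content and the integrability check via a large exponent on $1-|w|^2$ are identical.
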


\begin{proof}
If $f$ is a nonzero constant function, we use the measure
$$d\mu(w)=c\,\frac{|w|}w\,(1-|w|^2)^N\,dA(w),$$
where $c$ is an appropriate constant and $N$ is sufficiently large (depending on $p$). 
If $f(w)=w^n$ with $n\ge1$, we use the measure
$$d\mu(w)=c\,w^{n-1}(1-|w|^2)^N\,dA(w),$$
where $c$ is an appropriate constant and $N$ is sufficiently large (depending on $p$).
This follows from the Taylor expansion of the function $1/(1-z\overline w)$ and 
polar coordinates.
\end{proof}

We now proceed to the proof of Theorem A. First assume that $f\in B_p$ for some
$0<p\le1$. Let $k$ be any positive integer such that $pk>1$. Let $b=k+1$ and 
$\alpha=pk-2$. Then $b$ satisfies the condition in (\ref{eq2}). In fact, since $0<p\le1$,
we have
$$\max\left(1,\frac1p\right)+\frac{\alpha+1}p=\frac1p+\frac{pk-1}p=k<b.$$
Also, $f\in B_p$ if and only if its $k$-th order derivative $f^{(k)}$ is in $A^p_\alpha$ 
and the exponent in the numerator of (\ref{eq3}) is
$$\frac{pb-2-\alpha}p=1.$$
It follows from Theorem~\ref{5} that we can find an $r$-lattice $\{z_n\}$ in the
pseudo-hyperbolic metric and a sequence $\{c_n\}\in l^p$ such that
$$f^{(k)}(z)=\sum_{n=1}^\infty c_n\frac{1-|z_n|^2}{(1-z\overline z_n)^{k+1}},
\qquad z\in\D.$$
There is considerable freedom in the choice of the $r$-lattice in Theorem~\ref{5}. So
we may assume that $z_n\not=0$ for each $n$ and $|z_n|\to1$ as $n\to\infty$. We then 
consider the function
$$g(z)=\sum_{n=1}^\infty c_n'\frac{z-z_n}{1-z\overline z_n},\qquad z\in\D,$$
where 
$$c_n'=\frac{c_n}{k!\,\overline z_n^{k-1}},\qquad n\ge1.$$
Clearly, the sequence $\{c_n'\}$ is still in $l^p\subset l^1$. This is where we use
the assumption $0<p\le1$ in a critical way to ensure that the infinite series
defining $g$ actually converges. Differentiating term by term shows that the function 
$g$ satisfies $g^{(k)}=f^{(k)}$. Thus there is a polynomial $P(z)$ such that
$$f(z)=P(z)+\sum_{n=1}^\infty c_n'\frac{z-z_n}{1-z\overline z_n}.$$
By Lemma~\ref{6}, there is a measure $\nu$ such that the localized function $|\nu|_r(z)$ 
is in $L^p(\D,d\lambda)$ and
$$P(z)=\ind\frac{z-w}{1-z\overline w}\,d\nu(w),\qquad z\in\D.$$
If we define
$$\mu=\nu+\sum_{n=1}^\infty c_n'\delta_{z_n},$$
where $\delta_{z_n}$ denotes the unit point mass at $z_n$,
we obtain the desired representation for $f$ with the localized function $|\mu|_r(z)$ 
belonging to $L^p(\D,d\lambda)$. This proves one direction of Theorem A.

To prove the other direction of Theorem A, let us assume that $\mu$ is a complex Borel
measure such that the function $|\mu|_r(z)$ is in $L^p(\D,d\lambda)$. Let $r$ be a
sufficiently small radius and $\{z_n\}$ be an $r$-lattice in the pseudo-hyperbolic metric.
By Corollary~\ref{4} the sequence $|\mu|_r(z_n)$ is in $l^p$. Now if
$$f(z)=\ind\frac{z-w}{1-z\overline w}\,d\mu(w),\qquad z\in\D,$$
then
$$f^{(k)}(z)=k!\ind\frac{(1-|w|^2)\,\overline w^{k-1}}{(1-z\overline w)^{k+1}}\,d\mu(w),
\qquad z\in\D.$$
We use Lemma~\ref{2} to decompose $\D$ into the disjoint union of $\{D_n\}$ and rewrite
$$f^{(k)}(z)=k!\sum_{n=1}^\infty\int_{D_n}\frac{(1-|w|^2)\,\overline w^{k-1}}
{(1-z\overline w_n)^{k+1}}\,d\mu(w),$$
so that
$$|f^{(k)}(z)|\le k!\sum_{n=1}^\infty\int_{D_n}\frac{1-|w|^2}{|1-z\overline w|^{k+1}}
\,d|\mu|(w).$$
For each $n\ge1$ and $z\in\D$ there is some point $w_n(z)\in D_n$ such that
$$\frac{1-|w_n(z)|^2}{|1-z\overline w_n(z)|^{k+1}}=\sup_{w\in D_n}
\frac{1-|w|^2}{|1-z\overline w|^{k+1}}.$$
Thus,
$$|f^{(k)}(z)|\le\sum_{n=1}^\infty c_n\frac{1-|w_n(z)|^2}{|1-z\overline w_n(z)|^{k+1}}$$
for all $z\in\D$, where $c_n=k!\,|\mu|(D_n)$ is a sequence in $l^p$ as $|\mu|(D_n)\le
|\mu|(D(z_n,r))$. By Lemma 4.30 of \cite{Z1}, there exists a constant $C>0$ (independent
of $n$ and $z$) such that
$$\frac{1-|w_n(z)|^2}{|1-z\overline w_n(z)|^{k+1}}\le C\,\frac{1-|z_n|^2}
{|1-z\overline z_n|^{k+1}}$$
for all $n\ge1$ and all $z\in\D$. Therefore,
$$|f^{(k)}(z)|\le C\sum_{n=1}^\infty c_n\frac{1-|z_n|^2}{|1-z\overline z_n|^{k+1}}$$
for all $z\in\D$. Since $0<p\le1$, we apply H\"older's inequality to get
$$|f^{(k)}(z)|^p\le C^p\sum_{n=1}|c_n|^p\frac{(1-|z_n|^2)^p}{|1-z\overline z_n|^{p(k+1)}}.$$
Integrate term by term and apply Proposition 1.4.10 of \cite{R}. We obtain another
constant $C>0$ (independent of $f$) such that
$$\ind|f^{(k)}(z)|^p(1-|z|^2)^{pk-2}\,dA(z)\le C\sum_{n=1}^\infty|c_n|^p<\infty,$$
which shows that $f\in B_p$ and completes the proof of Theorem A.

It is clear that Theorem A cannot possibly be true when $p>1$. This is because any function 
$f$ represented by the integral in Theorem A must be bounded, while there are unbounded
functions in $B_p$ when $p>1$.

\section{Bergman type spaces on the unit ball}

In this section we show how to extend Theorem A to Bergman type spaces on
the unit ball $\bn$ in $\cn$. The main reference for this section is \cite{RZ}.
When $\alpha>-1$, all background information can also be found in \cite{Z2}.

For any real parameter $\alpha$ we consider the weighted volume measure 
$$dv_\alpha(z)=(1-|z|^2)^\alpha\,dv(z),$$
where $dv$ is the Lebesgue volume measure on $\bn$. 

For real $\alpha$ and $0<p<\infty$ we use $A^p_\alpha$ to denote the space of holomorphic
functions $f$ in $\bn$ such that $(1-|z|^2)^kR^kf(z)$ is in $L^p(\bn,dv_\alpha)$, where
$k$ is a nonnegative integer satisying $pk+\alpha>-1$ and $Rf$ is the standard radial 
derivative defined by
$$Rf(z)=z_1\frac{\partial f}{\partial z_1}+\cdots+z_n\frac{\partial f}{\partial z_n}.$$
It is well known that the space $A^p_\alpha$ is independent of the integer $k$ used in 
the definition.

Various names exist in the literature for the spaces $A^p_\alpha$: Bergman spaces, Besov
spaces, and Sobolev spaces, among others. We follow \cite{RZ} and call them Bergman spaces 
here. When $\alpha>-1$, $A^p_\alpha$ are indeed the weighted Bergman spaces with standard 
weights. For $\alpha=-(n+1)$, $A^p_\alpha$ become the so-called diagonal Besov spaces.

If $p$ is fixed, all the spaces $A^p_\alpha$ are isomorphic as Banach spaces for 
$1\le p<\infty$ and as complete metric spaces for $0<p<1$. The isometry can be realized
by certain fractional radial differential operators. Because of this, it is often enough 
for us just to consider the case $\alpha=0$ and obtain the other cases by fractional 
differentiation or fractional integration.

On the unit ball there exists a unique family of involutive automorphisms $\varphi_a(z)$
that are high dimensional analogs of the M\"obius maps
$$\varphi_a(z)=\frac{a-z}{1-\overline az}$$
on the unit disk. See \cite{R} and \cite{Z2}. The pseudo-hyperbolic metric on $\bn$ is
still the metric defined by $d(z,w)=|\varphi_z(w)|$. For any complex Borel measure $\mu$ 
on $\bn$ the localized function $\mu_r$ and the averaging
function $\widehat\mu_r$ are defined in exactly the same way as before. 

We can now extend Theorem A to all the spaces $A^p_\alpha$ as follows.

\begin{thm}
Suppose $\alpha$ is real, $0<p<\infty$, $0<r<1$, and
$$b>\max\left(1,\frac1p\right)+\frac{\alpha+1}p.$$
Then a function $f\in H(\bn)$ belongs to $A^p_\alpha$ if and only if
\begin{equation}
f(z)=\inb\frac{(1-|w|^2)^{(pb-n-1-\alpha)/p}}{(1-\langle z,w\rangle)^b}\,d\mu(w)
\label{eq7}
\end{equation}
for some complex Borel measure $\mu$ on $\bn$ with the localized function 
$|\mu|_r(z)$ belonging to $L^p(\D,d\lambda)$, where
$$d\lambda(z)=\frac{dv(z)}{(1-|z|^2)^{n+1}}$$
is the M\"obius invariant volume measure on $\bn$.
\label{7}
\end{thm}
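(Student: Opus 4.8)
The plan is to follow the same two-direction strategy used for Theorem A, but now over the ball and with the atomic decomposition built directly into the integral kernel. The key observation is that the representing integral in \eqref{eq7} is, up to the choice of measure, exactly the continuous analog of the atomic decomposition \eqref{eq3} (whose ball version is the main result of \cite{CR,Z2,RZ}). So I would first recall the atomic decomposition for $A^p_\alpha$ on $\bn$: under the hypothesis $b>\max(1,1/p)+(\alpha+1)/p$, there is a $\delta>0$ so that for any $r$-lattice $\{z_n\}$ with $r<\delta$, the space $A^p_\alpha$ consists precisely of functions
$$f(z)=\sum_{n=1}^\infty c_n\frac{(1-|z_n|^2)^{(pb-n-1-\alpha)/p}}{(1-\langle z,z_n\rangle)^b},\qquad \{c_n\}\in l^p.$$

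For the forward direction, suppose $f\in A^p_\alpha$. I would take the atomic decomposition above with a fixed $r$-lattice, and then simply convert the discrete sum into an integral against a discrete measure: set $\mu=\sum_n c_n\,\delta_{z_n}$. Then \eqref{eq7} holds with this $\mu$, and I must check that the localized function $|\mu|_r$ lies in $L^p(\D,d\lambda)$ (the statement's reference to $\D$ should read $\bn$). Since $|\mu|(D(z,r))=\sum_{z_n\in D(z,r)}|c_n|$, and an $r$-lattice has bounded overlap, the sequence condition $\{c_n\}\in l^p$ translates to $\{|\mu|_r(z_n)\}\in l^p$, which by the ball analog of Corollary~\ref{4} is equivalent to $|\mu|_r\in L^p(\bn,d\lambda)$. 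This direction is essentially bookkeeping once the right dictionary between sequences and localized measures is in place.

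For the converse, assume $f$ is given by \eqref{eq7} with $|\mu|_r\in L^p(\bn,d\lambda)$. I would apply a radial derivative $R^k$ (with $k$ chosen so that $pk+\alpha>-1$) to pass to a kernel of the form $(1-\langle z,w\rangle)^{-(b+k)}$, then discretize using Lemma~\ref{2}: write the ball as $\bigcup D_n$, bound the integrand on each $D_n$ by its supremum, use the ball version of Lemma 4.30 of \cite{Z1} to replace the sup-point $w_n(z)$ by the lattice center $z_n$, and arrive at a pointwise estimate
$$|R^kf(z)|\le C\sum_{n=1}^\infty c_n\frac{(1-|z_n|^2)^{(pb-n-1-\alpha)/p}}{|1-\langle z,z_n\rangle|^{b+k}},$$
with $c_n$ comparable to $|\mu|(D_n)$, hence $\{c_n\}\in l^p$. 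Since $0<p$, I would then apply the inequality $(\sum a_n)^p\le\sum a_n^p$ when $p\le1$ (and H\"older otherwise), integrate term by term against $dv_\alpha$, and invoke the standard integral estimate (the ball analog of Proposition 1.4.10 of \cite{R}) to obtain $\int_{\bn}|R^kf(z)|^p(1-|z|^2)^{pk}\,dv_\alpha(z)\le C\sum|c_n|^p<\infty$, so $f\in A^p_\alpha$.

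The main obstacle I anticipate is the case $p>1$, which has no analog in Theorem A: there the integral representation forced boundedness and the theorem was simply false for $p>1$, whereas here the kernel exponent $(pb-n-1-\alpha)/p$ is tuned so that the representation remains valid. For $p>1$ the convergence of the series $\sum c_n'$ used in the disk proof (which relied on $l^p\subset l^1$) is unavailable, so I would instead work directly with the continuous integral and handle the derivative estimate via H\"older's inequality against a dominating integral kernel, using the Schur-test/integral estimates of \cite{RZ} rather than discretization into $l^1$. Getting the exponents in the kernel and the weight to match up correctly across all real $\alpha$ and all $0<p<\infty$ — especially verifying that $b>\max(1,1/p)+(\alpha+1)/p$ is exactly the condition that makes both the atomic decomposition and the integral estimates converge — is where the real care is required.
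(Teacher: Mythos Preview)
Your proposal is correct and follows essentially the same route as the paper: the forward direction via the atomic decomposition of \cite{RZ} with an atomic measure $\mu=\sum c_n\delta_{z_n}$, and the converse via discretization over an $r$-lattice, the pointwise estimate on $R^Nf$, and the standard Bergman-space machinery (the paper simply cites pages 92--93 of \cite{Z1} for this last step, which is your H\"older/Schur-test argument).

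One small clarification: your ``main obstacle'' paragraph slightly conflates two separate issues. The $\sum c_n'$ convergence problem from the disk proof of Theorem~A arose only because the kernel $(z-w)/(1-z\bar w)$ did not match the atomic-decomposition kernel, forcing an antidifferentiation step that needed $l^p\subset l^1$. In Theorem~\ref{7} the kernel in \eqref{eq7} is already the atomic kernel, so the forward direction works for all $0<p<\infty$ with no such obstruction. The genuine $p>1$ issue lies only in the converse direction, where $(\sum a_n)^p\le\sum a_n^p$ fails; your proposed fix via H\"older and the integral estimates of \cite{R,RZ} is exactly what the cited pages of \cite{Z1} do, so you are on target there.
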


\begin{proof}
That every function $f\in A^p_\alpha$ has the desired integral representation 
in (\ref{eq7}), with $\mu$ being atomic, follows from Theorem~{32} in \cite{RZ}, 
which is the atomic decomposition for these spaces.

On the other hand, if $f$ is a function represented by (\ref{eq7}), we follow the second
half of the proof of Theorem A to obtain the following estimate,
$$|R^Nf(z)|\le C\sum_{k=1}^\infty c_k\frac{(1-|z_k|^2)^{(pb-n-1-\alpha)/p}}{|1-
\langle z,z_k\rangle|^{b+N}},\qquad z\in\bn,$$
where $C$ is some positive constant, $\{c_k\}\in l^p$, and $N$ is a sufficiently large
positive integer. Using the arguments on pages 92-93 of \cite{Z1} (the proof for atomic
decomposition of Bergman spaces) we can then show that $R^Nf$ belongs to the Bergman
space $A^p_{Np+\alpha}$, which means that $f\in A^p_\alpha$.
\end{proof}

One particular case is worth mentioning. If $p=1$ and $\mu$ is a positive Borel
measure on $\bn$, then we use Fubini's theorem, the fact that 
$\chi_{D(z,r)}(w)=\chi_{D(w,r)}(z)$, and the M\"obius invariance of $d\lambda$ to obtain
\begin{eqnarray*}
\inb\mu_r(z)\,d\lambda(z)&=&\inb\frac{\mu(D(z,r))\,dv(z)}{(1-|z|^2)^{n+1}}\\
&=&\inb\frac{dv(z)}{(1-|z|^2)^{n+1}}\inb\chi_{D(z,r)}(w)\,d\mu(w)\\
&=&\inb\,d\mu(w)\inb\frac{\chi_{D(w,r)}(z)\,dv(z)}{(1-|z|^2)^{n+1}}\\
&=&\inb\,d\mu(w)\int_{D(w,r)}\frac{dv(z)}{(1-|z|^2)^{n+1}}\\
&=&\inb\,d\mu(w)\int_{D(0,r)}\frac{dv(z)}{(1-|z|^2)^{n+1}}\\
&=& C_r\,\mu(\bn).
\end{eqnarray*}
Therefore, for general complex Borel measures $\mu$ on $\bn$, the condition 
$|\mu|_r\in L^1(\bn,d\lambda)$ is equivalent to the condition that $|\mu|(\bn)<\infty$. 
This is the original condition for $\mu$ in the integral representation of the minimal 
Besov space $B_1$.

It is also interesting to note that there exist many integral representations of
Bergman type spaces in terms of $L^p$ functions when $p\ge1$. See \cite{RZ} for
example. But there is no integral representation in terms of general $L^p$ 
functions when $p<1$, because the integrals cannot even be properly set up in this case.
But Theorem~\ref{7} above tells us that we can still obtain integral representations
in terms of measures when $0<p<1$. This allows us to recover some special integral
representation formulas in \cite{BB} in terms of certain special functions in the 
case $0<p<1$.

\section{Lipschitz spaces in the unit ball}

For any real number $t$ the holomorphic Lipschitz space $\Lambda_t$ on $\bn$ consists
of functions $f\in H(\bn)$ such that $(1-|z|^2)^{k-t}R^kf(z)$ is bounded, where
$k$ is any nonnegative integer greater than $\alpha$. It is known that the space
$\Lambda_t$ is independent of the integer $k$ used in the definition. See \cite{RZ}
for this and other background information about these spaces.

When $0<t<1$, $\Lambda_t$ consists of functions $f\in H(\bn)$ such that
$$|f(z)-f(w)|\le C|z-w|^t.$$
When $t=0$, we can take $k=1$ in the definition of $\Lambda_0$ and obtain the Bloch space
$\bloch$ of functions $f\in H(\bn)$ such that
$$\sup_{z\in\bn}(1-|z|^2)|Rf(z)|<\infty.$$
When $t=1$, the resulting Lipschitz space $\Lambda_1$ is usually called the Zygmund class.

We can define a family of Carleson measures on the unit ball in exactly the same way as
in the unit disk. Thus for any $t>0$ we say that a positive Borel measure $\mu$ on $\bn$
is a $t$-Carleson measure if for some (or every) $r\in(0,1)$ we have
$$\sup_{z\in\bn}\frac{\mu(D(z,r))}{(1-|z|^2)^t}<\infty.$$
If $\{z_k\}$ is an $r$-lattice in the pseudo-hyperbolic metric of $\bn$, then $\mu$ is
$t$-Carleson if and only if
$$\sup_{n\ge1}\frac{\mu(D(z_n,r))}{(1-|z_n|^2)^t}<\infty.$$
If $t>n$, then $\mu$ is a $t$-Carleson measure if and only if
\begin{equation}
\inb|f(z)|^p\,d\mu(z)\le C\inb|f(z)|^p\,dv_{t-n-1}(z)
\label{eq5}
\end{equation}
for some constant $C=C_p>0$ and all $f\in H(\bn)$.

The following theorem is modeled on the atomic decomposition for holomorphic Lipschitz
spaces. See Theorem 33 in \cite{RZ}.

\begin{thm}
Suppose $t$ and $b$ are two real parameters satisfying
\begin{enumerate}
\item[(a)] $b+t>n$.\
\item[(b)] $b$ is neither $0$ nor a negative integer.
\end{enumerate}
Then a function $f\in H(\bn)$ is in the Lipschitz space $\Lambda_t$ if and only if there 
exists a complex Borel measure $\mu$ such that the localized function $|\mu|_r(z)$ is 
bounded and
\begin{equation}
f(z)=\inb\frac{(1-|w|^2)^{b+t}}{(1-\langle z,w\rangle)^b}\,d\mu(w)
\label{eq4}
\end{equation}
for all $z\in\bn$.
\label{8}
\end{thm}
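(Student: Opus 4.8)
The plan is to follow the pattern of Theorem~A and its ball version Theorem~\ref{7}, with the $L^p(\D,d\lambda)$ condition on the coefficients replaced by plain boundedness, which is the $p=\infty$ analogue. The engine for the forward implication is the atomic decomposition for $\Lambda_t$ (Theorem~33 in \cite{RZ}), and the engine for the converse is the standard estimate for integrals of $(1-\langle z,w\rangle)^{-b}$ against powers of $1-|w|^2$ (Proposition~1.4.10 of \cite{R}, or its ball form in \cite{Z2}).

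For the \emph{only if} direction, I would assume $f\in\Lambda_t$. Hypotheses (a) and (b) are precisely those under which Theorem~33 of \cite{RZ} applies, so for a sufficiently fine $r$-lattice $\{z_k\}$ there is a bounded sequence $\{c_k\}$ with
$$f(z)=\sum_{k=1}^\infty c_k\frac{(1-|z_k|^2)^{b+t}}{(1-\langle z,z_k\rangle)^b}.$$
I would then take $\mu=\sum_k c_k\delta_{z_k}$, which reproduces $f$ in the form (\ref{eq4}) at once. To see that $|\mu|_r$ is bounded, I would invoke the separation inherent in an $r$-lattice: because the balls $D(z_k,r/4)$ are pairwise disjoint, each $D(z,r)$ meets at most a fixed number $M=M(r)$ of the $z_k$, so $|\mu|(D(z,r))\le M\sup_k|c_k|$ uniformly in $z$. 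Unlike in Theorem~A, no polynomial correction is needed, since the decomposition reproduces $f$ itself rather than a high-order derivative.

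For the \emph{if} direction, suppose $f$ is given by (\ref{eq4}) with $|\mu|(D(z,r))\le M$ for all $z$, and fix an integer $N>t$. Differentiating under the integral sign, and using that $R^N(1-\langle z,w\rangle)^{-b}$ is a finite combination of terms $\langle z,w\rangle^j(1-\langle z,w\rangle)^{-b-j}$ with leading term comparable to $(1-\langle z,w\rangle)^{-b-N}$, I obtain
$$|R^Nf(z)|\le C\inb\frac{(1-|w|^2)^{b+t}}{|1-\langle z,w\rangle|^{b+N}}\,d|\mu|(w).$$
Decomposing $\bn$ into the lattice pieces $\{D_k\}$ furnished by the ball analogue of Lemma~\ref{2}, and using that $1-|w|^2$ and $|1-\langle z,w\rangle|$ are comparable to their values at $z_k$ throughout $D_k$, this is dominated by
$$CM\sum_{k=1}^\infty\frac{(1-|z_k|^2)^{b+t}}{|1-\langle z,z_k\rangle|^{b+N}},$$
and comparing this lattice sum with the corresponding integral against $d\lambda$ bounds it by a constant multiple of
$$\inb\frac{(1-|w|^2)^{b+t-n-1}}{|1-\langle z,w\rangle|^{b+N}}\,dv(w).$$

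The heart of the matter is this last integral, and it is where both hypotheses earn their keep. Condition (a), $b+t>n$, forces the exponent $b+t-n-1$ on $1-|w|^2$ to exceed $-1$, which is exactly the admissibility requirement for the standard estimate; condition (b) is what keeps the differentiated kernel genuinely singular, guaranteeing that the $(1-\langle z,w\rangle)^{-b-N}$ leading term above does not degenerate into a polynomial. Since $N>t$, the denominator exponent $b+N$ exceeds the critical value $b+t$, so the integral is comparable to $(1-|z|^2)^{(b+t)-(b+N)}=(1-|z|^2)^{t-N}$. Hence $(1-|z|^2)^{N-t}|R^Nf(z)|\le CM$ for all $z\in\bn$, i.e.\ $f\in\Lambda_t$. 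I expect the main obstacle to be the bookkeeping in passing from the lattice sum to the model integral and in verifying that the lower-order terms produced by $R^N$ are harmless; once (a) secures the convergence of the model integral, the remainder reduces to the familiar comparability estimates on pseudo-hyperbolic balls already used in the proofs of Theorems~A and~\ref{7}.
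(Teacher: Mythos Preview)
Your proposal is correct, and the ``only if'' direction matches the paper exactly: invoke the atomic decomposition (Theorem~33 of \cite{RZ}) and take $\mu=\sum c_k\delta_{z_k}$.

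For the ``if'' direction the paper takes a slightly different, more packaged route. Rather than decomposing $\bn$ into lattice pieces and converting the resulting sum back into a model integral by hand, the paper sets $d\nu(w)=(1-|w|^2)^{b+t}\,d|\mu|(w)$, observes that boundedness of $|\mu|_r$ is exactly the statement that $\nu$ is $(b+t)$-Carleson, and then uses the Carleson inequality (\ref{eq5}) (valid since $b+t>n$) to replace $d\nu$ by $(1-|w|^2)^{b+t-n-1}\,dv(w)$ in one stroke; Proposition~1.4.10 of \cite{R} then finishes as in your argument. Your lattice decomposition is in effect a re-derivation of that Carleson inequality inside the proof, so the two approaches are equivalent in content; the paper's is shorter, yours is more self-contained. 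One small misattribution: condition~(b) is not really doing work in the ``if'' direction (if the kernel degenerated to a polynomial the estimate would only become easier); it is required for the atomic decomposition in the ``only if'' direction.
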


\begin{proof}
If $f$ admits the representation in (\ref{eq4}), where the localized function $|\mu|_r$ 
is bounded, then differentiating under the integral sign gives
$$|R^kf(z)|\le C\inb\frac{(1-|w|^2)^{b+t}}{|1-\langle z,w\rangle|^{b+k}}\,d|\mu|(w),
\qquad z\in\bn,$$
where $k$ is a nonnegative integer greater than $t$ and $C$ is a positive constant 
independent of $z$. Let
$$d\nu(z)=(1-|z|^2)^{b+t}\,d|\mu|(z).$$
Then the boundedness of the localized function $|\mu|_r(z)$ is equivalent to the measure
$\nu$ being $(b+t)$-Carleson. Since $b+t>n$, it follows from (\ref{eq5}) that there
is another constant $C>0$, independent of $z$, such that
$$|R^kf(z)|\le C\inb\frac{(1-|w|^2)^{b+t-n-1}}{|1-\langle z,w\rangle|^{b+k}}\,dv(w)$$
for all $z\in\bn$. Since $b+t-n-1>-1$, $k>t$, and
$$b+k=n+1+(b+t-n-1)+(k-t),$$
it follows from Proposition 1.4.10 of \cite{R} that there is another constant $C>0$,
indepedent of $z$, such that
$$|R^kf(z)|\le\frac C{(1-|z|^2)^{k-t}},\qquad z\in\bn.$$
This shows that $f\in\Lambda_t$.

On the other hand, if $f\in\Lambda_t$, then by the atomic decomposition theorem for
Lipschitz spaces (see Theorem 33 of \cite{RZ}), there exists an $r$-lattice 
$\{z_k\}$ in the pseudo-hyperbolic metric and a sequence $\{c_k\}\in l^\infty$ such that
$$f(z)=\sum_{k=1}^\infty c_k\frac{(1-|z_k|^2)^{b+t}}{(1-\langle z,z_k\rangle)^b}$$
for all $z\in\bn$. Let
$$\mu=\sum_{k=1}^\infty c_k\delta_{z_k}.$$
Then
$$f(z)=\inb\frac{(1-|w|^2)^{b+t}}{(1-\langle z,w\rangle)^b}\,d\mu(w),\qquad z\in\bn,$$
and the measure $\mu$ has the property that $|\mu|_r(z)$ is a bounded function. This
completes the proof of the theorem.
\end{proof}

We want to write down an equivalent form of the above theorem which may be more useful 
in certain situations. More specifically, if we write $b=n+1+\alpha-t$, then the
condition $b+t>n$ becomes $\alpha>-1$. If we combine the factor $(1-|w|^2)^{b+t}$ with
the measure $\mu$ in Theorem~\ref{8}, we obtain the following equivalent form.

\begin{thm}
Suppose $t$ is real, $\alpha>-1$, and the number $n+1+\alpha-t$ is neither $0$ nor a
negative integer. Then a function $f\in H(\bn)$ belongs to the Lipschitz space
$\Lambda_t$ if and only if we can represent $f$ by
\begin{equation}
f(z)=\inb\frac{d\mu(w)}{(1-\langle z,w\rangle)^{n+1+\alpha-t}},
\label{eq6}
\end{equation}
where $|\mu|$ is $(n+1+\alpha)$-Carleson, namely,
$$\inb|g(z)|^p\,d|\mu|(z)\le C\inb|g(z)|^p\,dv_\alpha(z)$$
for some constant $C=C(p,\alpha)$ and all $g\in H(\bn)$.
\label{9}
\end{thm}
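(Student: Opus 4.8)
The plan is to deduce Theorem~\ref{9} directly from Theorem~\ref{8} by absorbing the weight $(1-|w|^2)^{b+t}$ into the representing measure and tracking what happens to the two defining conditions. Throughout I would fix the substitution $b=n+1+\alpha-t$, so that $b+t=n+1+\alpha$. With this choice the hypotheses of the two theorems match exactly: the requirement $b+t>n$ in Theorem~\ref{8} becomes $\alpha>-1$, and the requirement that $b$ be neither $0$ nor a negative integer becomes precisely the stated condition that $n+1+\alpha-t$ be neither $0$ nor a negative integer. Hence Theorem~\ref{8} is available under the hypotheses of Theorem~\ref{9}, and it already characterizes membership in $\Lambda_t$; it therefore suffices to set up a bijection between the measures occurring in the two representations.

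First I would pass from a representing measure $\mu$ of Theorem~\ref{8} to a representing measure $\nu$ of Theorem~\ref{9}. Given
$$f(z)=\inb\frac{(1-|w|^2)^{b+t}}{(1-\langle z,w\rangle)^b}\,d\mu(w),$$
I define the complex Borel measure $\nu$ by $d\nu(w)=(1-|w|^2)^{b+t}\,d\mu(w)$. Since $b+t=n+1+\alpha$ and $b=n+1+\alpha-t$, substituting $\nu$ turns this integral into exactly the form (\ref{eq6}). Conversely, starting from a representation (\ref{eq6}) with a measure $\nu$, I would set $d\mu(w)=(1-|w|^2)^{-(n+1+\alpha)}\,d\nu(w)$ to recover the form in Theorem~\ref{8}. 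These two operations are inverse to one another, so the whole proof reduces to showing that the condition ``$|\mu|_r$ is bounded'' corresponds to the condition ``$|\nu|$ is $(n+1+\alpha)$-Carleson''.

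The key estimate is the comparability of the weight on pseudo-hyperbolic disks: for $w\in D(z,r)$ the quantity $1-|w|^2$ is comparable to $1-|z|^2$, with constants depending only on $r$. This is the same fact used in the proof of Corollary~\ref{4}. Since $d|\nu|(w)=(1-|w|^2)^{n+1+\alpha}\,d|\mu|(w)$, integrating over $D(z,r)$ gives
$$|\nu|(D(z,r))=\int_{D(z,r)}(1-|w|^2)^{n+1+\alpha}\,d|\mu|(w)\sim(1-|z|^2)^{n+1+\alpha}\,|\mu|(D(z,r)),$$
so that $|\nu|(D(z,r))/(1-|z|^2)^{n+1+\alpha}$ is comparable to $|\mu|_r(z)$. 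Hence $|\mu|_r$ is a bounded function on $\bn$ if and only if $\sup_{z\in\bn}|\nu|(D(z,r))/(1-|z|^2)^{n+1+\alpha}<\infty$, that is, if and only if $|\nu|$ is $(n+1+\alpha)$-Carleson.

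Finally, because $\alpha>-1$ forces $n+1+\alpha>n$, the $(n+1+\alpha)$-Carleson property of $|\nu|$ is equivalent, via the integral characterization recorded in (\ref{eq5}) with $t=n+1+\alpha$ (so that $dv_{t-n-1}=dv_\alpha$), to the testing inequality displayed in the statement. Renaming $\nu$ back to $\mu$ completes the identification, and Theorem~\ref{9} follows. The argument has no genuinely hard step; it is a reparametrization combined with the standard weight comparison. The one point that demands care is matching the two normalizations of the Carleson condition -- the geometric supremum over pseudo-hyperbolic disks versus the integral testing inequality against $dv_\alpha$ -- and this is exactly where the hypothesis $\alpha>-1$, equivalently $n+1+\alpha>n$, is needed in order to invoke (\ref{eq5}).
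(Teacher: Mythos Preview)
Your proposal is correct and matches the paper's own treatment: the paper explicitly obtains Theorem~\ref{9} from Theorem~\ref{8} by the substitution $b=n+1+\alpha-t$ and by absorbing the factor $(1-|w|^2)^{b+t}$ into the measure, exactly as you do, with the weight comparability on $D(z,r)$ supplying the equivalence of the two measure conditions. The only addition in the paper is a side remark that, for the direction $f\in\Lambda_t\Rightarrow$ representation, one can alternatively avoid atomic decomposition by writing $f$ as an integral against $g\,dv_\alpha$ with $g\in L^\infty(\bn)$ via Theorem~17 of \cite{RZ}; your reduction to Theorem~\ref{8} is the paper's primary argument.
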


In this equivalent form, the second half of the proof of Theorem~\ref{8} can be replaced
by an argument based on absolutely continuous measures instead of atomic measures (and so
avoiding the use of atomic decomposition). In fact, if $f\in\Lambda_t$, then by
Theorem 17 of \cite{RZ}, there exists a function $g\in L^\infty(\bn)$ such that
$$f(z)=\inb\frac{g(w)\,dv_\alpha(w)}{(1-\langle z,w\rangle)^{n+1+\alpha-t}},\qquad z\in\bn.$$
Setting $d\mu(w)=g(w)\,dv_\alpha(w)$ leads to the representation in (\ref{eq6}) with
$|\mu|$ being an $(n+1+\alpha)$-Carleson measure.

We now use Theorem~\ref{8} to prove Theorem B.

First assume that $t>1$, $|\mu|$ is $t$-Carleson, and
$$f(z)=\ind\frac{z-w}{1-z\overline w}\,d\mu(w),\qquad z\in\D.$$
Then
$$|f^{(k)}(z)|\le\ind\frac{1-|w|^2}{|1-z\overline w|^{k+1}}\,d|\mu|(w),\qquad z\in\D,$$
where $k$ is any positive integer greater than $t$. Let 
$$d\nu(w)=(1-|w|^2)\,d|\mu|(w)$$
Then $\nu$ is $(t+1)$-Carleson, so there exists a positive constant $C$ such that
$$|f^{(k)}(z)|\le C\ind\frac{(1-|w|^2)^{t-1}}{|1-z\overline w|^{k+1}}\,dA(w)$$
for all $z\in\D$. An application of Proposition 1.4.10 of \cite{R} then produces another
positive constant $C$ such that
$$|f^{(k)}(z)|\le\frac C{(1-|z|^2)^{k-t}},\qquad z\in\D,$$
which means that $f\in\Lambda_t$.

Next assume that $t>1$ and $f\in\Lambda_t$. Since
$$(1-|z|^2)^{k-t}f^{(k)}(z)=(1-|z|^2)^{(k-1)-(t-1)}(f')^{(k-1)}(z),$$
where $k$ is sufficiently large, we see that $f'\in\Lambda_{t-1}$. We apply 
Theorem~\ref{8} to the function $f'$ to obtain a complex Borel measure $\nu$ such that
$$f'(z)=\ind\frac{(1-|w|^2)^{t+1}}{(1-z\overline w)^2}\,d\nu(w),\qquad z\in\D,$$
where the localized function $|\nu|_r(z)$ is bounded. Let
$$d\mu(z)=(1-|z|^2)^t\,d\nu(z).$$
Then $|\mu|$ is $t$-Carleson and
$$f'(z)=\ind\frac{1-|w|^2}{(1-z\overline w)^2}\,d\mu(w),\qquad z\in\D.$$
Integrate both sides, note that $\mu$ is a finite measure on $\D$ (any $t$-Carleson measure
is finite when $t>1$), and take care of the integration constant using Lemma~\ref{6}.
We obtain the integral representation for $f$ in Theorem B.

It is clear that the conclusion in Theorem B is false if $t\le1$. In fact, if
$$f(z)=\ind\frac{z-w}{1-z\overline w}\,d\mu(w),\qquad z\in\D,$$
for some measure $\mu$, then $\mu$ must be finite and $f$ must be bounded. Although every
function in $\Lambda_t$ is bounded when $t>0$, not every $t$-Carleson measure is finite
when $t\le1$. So the assumption $t>1$ in Theorem B is best possible.

\section{Fock spaces in $\cn$}

We now consider Fock spaces in $\cn$. Throughout this section we let $\alpha$
denote a {\it positive} weight parameter and define
$$d\lambda_\alpha(z)=c_\alpha\,e^{-\alpha|z|^2}\,dv(z),$$
where $dv$ is volume measure in $\cn$ and $c_\alpha$ is a positive normalizing constant so
that $\lambda_\alpha(\cn)=1$. These are called (weighted) Gaussian measures.

For $0<p\le\infty$ let $F^p_\alpha$ denote the space of entire functions $f$ in $\cn$ such that
the function $f(z)e^{-\alpha|z|^2/2}$ belongs to $L^p(\cn,dv)$. These are called Fock spaces.
Sometimes they are also called Bargmann or Segal-Bargmann spaces.

In place of the pseudo-hyperbolic metric for $\bn$, we use the Euclidean metric in this
context. It is even easier to define the notion of lattices in the Euclidean metric. So we
will not elaborate on such details.

The following result is well known and is usually referred to as the atomic decomposition
for Fock spaces. See \cite{JPR} for the case $1\le p\le\infty$ and \cite{W} for the
case $0<p<1$. See \cite{Z3} for more background information about Fock spaces.

\begin{thm}
For any $0<p\le\infty$ there exists a constant $\delta=\delta(p,\alpha)>0$ with the
following property: if $r\in(0,\delta)$ and $\{z_k\}$ is any $r$-lattice in $\cn$ in the 
Euclidean metric, then an entire function $f$ belongs to $F^p_\alpha$ if and only if
$$f(z)=\sum_{k=1}^\infty c_ke^{\alpha\langle z,z_k\rangle-\frac\alpha2|z_k|^2}$$
for some sequence $\{c_k\}\in l^p$.
\label{10}
\end{thm}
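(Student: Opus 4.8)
The plan is to mirror the atomic-decomposition argument for Bergman spaces used in the previous sections, with the pseudo-hyperbolic geometry replaced by the flat Euclidean geometry that governs Fock spaces. The single most useful fact is the pointwise identity
$$\left|e^{\alpha\langle z,z_k\rangle-\frac\alpha2|z_k|^2}\right|\,e^{-\frac\alpha2|z|^2}=e^{-\frac\alpha2|z-z_k|^2},$$
which says that each atom, once weighted by the Gaussian defining $F^p_\alpha$, becomes a translate of a single fixed Gaussian bump. This reduces every norm computation to translation-invariant estimates, and it is the exact analog of the slow variation of $(1-|z|^2)^2$ across a pseudo-hyperbolic disk used in Corollary~\ref{4}.

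For the sufficiency direction (that an $l^p$ series produces an element of $F^p_\alpha$) I would separate the ranges $0<p\le1$ and $1<p\le\infty$. When $0<p\le1$ the elementary inequality $|\sum a_k|^p\le\sum|a_k|^p$ gives
$$|f(z)|^p\,e^{-\frac{p\alpha}2|z|^2}\le\sum_k|c_k|^p\,e^{-\frac{p\alpha}2|z-z_k|^2},$$
and integrating against $dv$ yields $\|f\|_{F^p_\alpha}^p\le C\sum_k|c_k|^p$, since each Gaussian integrates to the same finite constant by translation invariance; this is the precise counterpart of the H\"older-inequality step in the second half of the proof of Theorem A. When $p>1$ I would instead view the synthesis map $\{c_k\}\mapsto f$ as a discrete analog of convolution with a Gaussian and bound it by a Schur test, using that for a sufficiently separated lattice one has $\sup_z\sum_k e^{-\frac\alpha2|z-z_k|^2}<\infty$ together with $\int_{\cn}e^{-\frac\alpha2|z-z_k|^2}\,dv(z)=C$ uniformly in $k$.

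The existence direction is the substantive half. Here I would start from the reproducing formula $f(z)=c_\alpha\int_{\cn}f(w)\,e^{\alpha\langle z,w\rangle-\alpha|w|^2}\,dv(w)$, valid first on a dense subspace and then on all of $F^2_\alpha$, and discretize it along the lattice. Partitioning $\cn$ into cells $Q_k$ with $z_k\in Q_k$ and replacing the integral by the corresponding Riemann sum defines an operator
$$T_rf(z)=\sum_k c_\alpha|Q_k|\,f(z_k)\,e^{-\frac\alpha2|z_k|^2}\,e^{\alpha\langle z,z_k\rangle-\frac\alpha2|z_k|^2},$$
which already exhibits $T_rf$ as an atomic series with coefficients $c_k=c_\alpha|Q_k|f(z_k)e^{-\frac\alpha2|z_k|^2}$. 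The crucial step is to show that $\|I-T_r\|$, as an operator on $F^p_\alpha$, tends to $0$ as the lattice spacing $r\to0$; once $\|I-T_r\|<1$ for $r<\delta$, a Neumann-series inversion gives $f=T_r(T_r^{-1}f)$, so $f$ is an atomic series whose coefficients are, up to the uniform weights, the values $(T_r^{-1}f)(z_k)$, and the sufficiency direction already proved guarantees $\{c_k\}\in l^p$.

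I expect the operator-norm estimate $\|I-T_r\|\to0$ to be the main obstacle, since it requires controlling the Riemann-sum error uniformly in $p$ and hence careful use of the fact that the kernel varies slowly on a cell of size $r$ relative to the Gaussian scale. For the endpoint $p=\infty$ and the range $0<p<1$ the mapping and invertibility arguments must be carried out in the (quasi-)Banach category rather than through Hilbert-space duality, the quasi-triangle inequality entering the Neumann-series convergence; this is precisely why the cited references treat $1\le p\le\infty$ and $0<p<1$ separately.
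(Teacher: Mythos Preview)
The paper does not actually prove this theorem: it quotes it as a known result, citing \cite{JPR} for $1\le p\le\infty$ and \cite{W} for $0<p<1$, and refers to \cite{Z3} for background. So there is no ``paper's own proof'' to compare against; the statement is used as a black box in the proof of Theorem~\ref{11}.

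That said, the outline you give is essentially the standard argument found in those references and in \cite{Z3}: the Gaussian identity
$\bigl|e^{\alpha\langle z,z_k\rangle-\frac\alpha2|z_k|^2}\bigr|\,e^{-\frac\alpha2|z|^2}=e^{-\frac\alpha2|z-z_k|^2}$
reduces all estimates to translation-invariant Gaussian bumps; the synthesis bound is immediate for $0<p\le1$ by the $p$-triangle inequality and for $p>1$ by a Schur test; and the existence half goes by discretizing the reproducing formula and inverting via a Neumann series once $\|I-T_r\|<1$. One small slip: in your last paragraph you say ``the sufficiency direction already proved guarantees $\{c_k\}\in l^p$,'' but sufficiency goes the other way. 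What is actually needed is the \emph{sampling} inequality
$\sum_k\bigl|g(z_k)e^{-\frac\alpha2|z_k|^2}\bigr|^p\le C\,\|g\|_{F^p_\alpha}^p$
for $g=T_r^{-1}f$, which follows from the local sub-mean-value estimate for $|g(w)e^{-\frac\alpha2|w|^2}|^p$ integrated over the lattice cells $Q_k$. With that correction your sketch is the standard proof, and indeed this sampling bound is exactly what one needs anyway to show $T_r$ is bounded on $F^p_\alpha$ before estimating $\|I-T_r\|$.
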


The integral representation of Fock spaces in terms of complex Borel measures then 
takes the following form.

\begin{thm}
Let $0<p\le\infty$, $r>0$, and $f$ be an entire function. Then $f\in F^p_\alpha$ if and 
only if there exists a complex Borel measure $\mu$ on $\cn$ such that the localized function
$|\mu|_r(z)=|\mu|(D(z,r))$ belongs to $L^p(\cn,dv)$ and
$$f(z)=\int_{\cn}e^{\alpha\langle z,w\rangle-\frac\alpha2|w|^2}\,d\mu(w)$$
for all $z\in\cn$.
\label{11}
\end{thm}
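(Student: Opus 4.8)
The plan is to mirror the proof of Theorem~\ref{7} (the Bergman case), substituting the Fock atomic decomposition of Theorem~\ref{10} for the Bergman atomic decomposition and replacing the pseudo-hyperbolic geometry by the flat Euclidean geometry throughout. The two directions are handled separately, exactly as in Theorem A and Theorem~\ref{7}.

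For the easy direction, suppose first that $f\in F^p_\alpha$. By Theorem~\ref{10} we may fix a small radius $r$ and an $r$-lattice $\{z_k\}$ in the Euclidean metric so that
$$f(z)=\sum_{k=1}^\infty c_k\,e^{\alpha\langle z,z_k\rangle-\frac\alpha2|z_k|^2},\qquad \{c_k\}\in l^p.$$
I would then simply set $\mu=\sum_k c_k\delta_{z_k}$, so that $f$ has the desired integral form with an atomic measure. It remains to check that $|\mu|_r\in L^p(\cn,dv)$. Since the lattice is separated, each Euclidean disk $D(z,r)$ meets only a bounded number of the $z_k$, so $|\mu|(D(z,r))\le C\sum_{k:z_k\in D(z,r)}|c_k|$, and integrating $|\mu|_r^p$ over $\cn$ reduces (via the uniform bound on overlaps and the fact that each $z_k$ is covered by a set of fixed volume) to a constant multiple of $\sum_k|c_k|^p<\infty$. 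Here I would again invoke the $0<p\le 1$ style reasoning only if $p\le 1$; for $p>1$ the separation estimate together with a discrete H\"older argument gives the same conclusion, so the bound holds for the full range $0<p\le\infty$.

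For the converse, assume $\mu$ is a complex Borel measure with $|\mu|_r\in L^p(\cn,dv)$ and $f(z)=\int_{\cn}e^{\alpha\langle z,w\rangle-\frac\alpha2|w|^2}\,d\mu(w)$. Following the second half of the proof of Theorem A, I would decompose $\cn$ into disjoint Borel pieces $\{D_k\}$ with $D(z_k,r/4)\subset D_k\subset D(z_k,r)$ (the Euclidean analogue of Lemma~\ref{2}), bound the integral over each $D_k$ by its supremum times $|\mu|(D_k)$, and use the slow variation of the Fock kernel $e^{\alpha\langle z,w\rangle-\frac\alpha2|w|^2}$ over a fixed-radius disk to replace the running point by the lattice center $z_k$. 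This yields a pointwise domination
$$\bigl|f(z)e^{-\frac\alpha2|z|^2}\bigr|\le C\sum_{k=1}^\infty c_k\,e^{-\frac\alpha4|z-z_k|^2},$$
with $c_k=|\mu|(D_k)$ satisfying $\{c_k\}\in l^p$ (because $|\mu|(D_k)\le|\mu|(D(z_k,r))$ and the sequence $|\mu|_r(z_k)$ lies in $l^p$ by the Euclidean version of Corollary~\ref{4}). Finally I would take $L^p(\cn,dv)$ norms: for $0<p\le 1$ apply the subadditivity $(\sum a_k)^p\le\sum a_k^p$ and integrate term by term; for $p>1$ use the standard convolution-type estimate that the Gaussian-dominated sum has $L^p$ norm comparable to $\|\{c_k\}\|_{l^p}$. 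Either way one concludes $f(z)e^{-\alpha|z|^2/2}\in L^p$, i.e. $f\in F^p_\alpha$.

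The main obstacle, as in the Bergman case, is the kernel-replacement step: I must verify that the Fock reproducing kernel varies by only a bounded factor across a disk $D(z_k,r)$ of fixed radius, so that $\sup_{w\in D_k}\bigl|e^{\alpha\langle z,w\rangle-\frac\alpha2|w|^2-\frac\alpha2|z|^2}\bigr|\le C\,e^{-\frac\alpha4|z-z_k|^2}$ uniformly in $z$ and $k$. This is the Fock analogue of Lemma 4.30 of \cite{Z1}, and it follows from the elementary identity $\alpha\,\mathrm{Re}\,\langle z,w\rangle-\tfrac\alpha2|w|^2-\tfrac\alpha2|z|^2=-\tfrac\alpha2|z-w|^2$ together with $|z-w|^2\ge\tfrac12|z-z_k|^2-|w-z_k|^2$ and the bound $|w-z_k|<r$; the bookkeeping is routine once this estimate is in hand, and everything else parallels the earlier proofs verbatim.
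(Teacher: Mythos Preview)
Your proposal is correct and follows exactly the route the paper indicates: one direction via the atomic decomposition of Theorem~\ref{10} using the atomic measure $\mu=\sum c_k\delta_{z_k}$, and the other by adapting the kernel-replacement and decomposition argument from the proofs of Theorem~A and Theorem~\ref{7} to the Euclidean/Fock setting. In fact the paper's own proof simply says ``we omit the details,'' so you have supplied precisely the details the paper leaves out, including the Fock analogue of Lemma~4.30 of \cite{Z1} via the identity $\alpha\,\mathrm{Re}\,\langle z,w\rangle-\tfrac\alpha2|w|^2-\tfrac\alpha2|z|^2=-\tfrac\alpha2|z-w|^2$.
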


\begin{proof}
One direction actually follows from the atomic decomposition when we use atomic measures.
The other direction follows from the proof of the atomic decomposition theorem, as adapted
in the previous sections. We omit the details.
\end{proof}

It is well known that for $1\le p\le\infty$, an entire function $f$ in $\cn$ belongs to
the Fock space $F^p_\alpha$ if and only if there exists a function $g$ such that
the function $g(z)e^{-\alpha|z|^2/2}$ is in $L^p(\cn,dv)$ and
$$f(z)=\int_{\cn} e^{\alpha\langle z,w\rangle}g(w)\,d\lambda_\alpha(w)$$
for all $z\in\cn$. It is clear that Theorem~\ref{11} above is an extension of this
integral representation to the case of measures. 

More interesting to us here is when $0<p<1$. In this case, there is no integral
representation of $F^p_\alpha$ in terms of general $L^p$ functions. But Theorem~\ref{11} 
tells us that we can still do integral representation using measures.

Finally we mention that, just as in the case of Bergman and Besov spaces, the condition
that $|\mu|_r\in L^1(\cn,dv)$ is the same as $|\mu|(\cn)<\infty$. Also, the
condition that $|\mu|_r\in L^\infty(\cn)$ is the same as
$$\int_{\cn}\left|f(z)e^{-\frac\alpha2|z|^2}\right|^p\,d|\mu|(z)\le C_p
\int_{\cn}\left|f(z)e^{-\frac\alpha2|z|^2}\right|^p\,dv(z),$$
so it is reasonable to call $|\mu|$ an $F^p_\alpha$-Carleson measure in this case.

\end{document}